\theoremstyle{plain}
\newtheorem{theorem}{Theorem}[section]
\newtheorem{proposition}[theorem]{Proposition}
\newtheorem{lemma}[theorem]{Lemma}
\theoremstyle{definition}
\newtheorem{definition}[theorem]{Definition}
\newcommand{\Gal}{{\rm Gal}}
\newcommand{\ZZ}{\mathbb{Z}}
\newcommand{\FF}{\mathbb{F}}
\newcommand{\M}{\mathcal{M}}
\def\FF{\mathbb{F}}
\def\AA{\mathbb{A}}
\def\AA{\mathbb{A}}
\def\lra{\longrightarrow}
\newcommand{\bfrho}{\mbox{\boldmath$\rho$}}
\newcommand{\bfomega}{\mbox{\boldmath$\omega$}}
\begin{document}

\title[Sums of two squares over finite fields]{Sums of two squares in short intervals in polynomial rings over finite fields}
\author{Efrat Bank}
\address{Raymond and Beverly Sackler School of Mathematical Sciences, Tel Aviv University, Tel Aviv 69978, Israel}
\email{bankefrat@gmail.com}
\author{Lior Bary-Soroker}
\address{Raymond and Beverly Sackler School of Mathematical Sciences, Tel Aviv University, Tel Aviv 69978, Israel}
\email{barylior@post.tau.ac.il}
\author{Arno Fehm}
\address{Universit\"at Konstanz, Fachbereich Mathematik und Statistik, Fach D 203, 78457 Konstanz, Germany}
\email{arno.fehm@uni-konstanz.de}

\maketitle

\begin{abstract}
Landau's theorem asserts that the asymptotic density of sums of two squares in the interval $1\leq n\leq x$ is $K/{\sqrt{\log x}}$, where $K$ is the Landau-Ramanujan constant.
It is an old problem in number theory whether the asymptotic density remains the same in intervals $|n-x|\leq x^{\epsilon}$ for a fixed $\epsilon$ and $x\to \infty$.
 
This work resolves a function field analogue of this problem, in the limit of a large finite field. More precisely, consider monic $f_0\in \mathbb{F}_q[T]$ of degree $n$ and take $\epsilon$ with $1>\epsilon\geq \frac2n$. Then the asymptotic density of polynomials $f$ in the `interval' $\deg(f-f_0)\leq \epsilon n$ that are of the form $f=A^2+TB^2$, $A,B\in \FF_q[T]$ is $\frac{1}{4^n}\binom{2n}{n}$ as $q\to \infty$. This density agrees with the  asymptotic density of such monic $f$'s of degree $n$ as $q\to \infty$, as was shown by the second author, Smilanski, and Wolf.

A key point in the proof is the calculation of the Galois group of $f(-T^2)$, where $f$ is a polynomial of degree $n$ with a few variable coefficients: The Galois group is the hyperoctahedral group of order $2^nn!$. 
\end{abstract}

\section{Introduction}
An integer $n$ is  a sum of two squares if there exist  $a,b\in \ZZ$ such that $n=a^2+b^2$. 
Fermat's theorem characterizes sums of two squares as those integers for which in their prime factorization 
each prime $p\equiv 3\pmod 4$ appears with even multiplicity. This can be deduced by studying the prime factorization in the ring of Gaussian integers $\ZZ[i]$ and noting that $n$ is a sum of two squares if and only if it is a norm of an element from $\ZZ[i]$. We let 
\begin{equation}
b(n) = \begin{cases}
1, & n = a^2+b^2 \\
0, & \mbox{otherwise}
\end{cases}
\end{equation}
be the characteristic function of the set of integers that are a sum of two squares. 

\subsection{Landau's Theorem}
A famous theorem of Landau \cite{Landau} gives the mean value of $b(n)$:
\begin{equation}\label{LandauThm}
\left<b(n)\right>_{n\leq x} \;:=\; \frac{1}{x} \sum_{n\leq x} b(n) \;\sim\; K \frac{1}{\sqrt{\log x}}, \qquad x\to \infty
\end{equation}
where
\begin{equation}
K\;=\;\frac{1}{\sqrt{2}}\prod_{p\, \equiv\, 3\,(\mathrm{mod}\,4)}(1-p^{-2})^{-1/2}\;\approx\; 0.764  
\end{equation}
is the Landau-Ramanujan constant. 
The reader may note the similarity of \eqref{LandauThm} to the Prime Number Theorem that gives the mean value of the characteristic function of the primes $\lambda$:
\[
\left<\lambda(n)\right>_{n\leq x} \sim \frac{1}{\log x}.
\]
Indeed, \eqref{LandauThm} is based on 
Fermat's theorem,
which allows one to express the generating function  $\sum_{n=1}^{\infty}b(n)n^{-s}$ in terms of the Riemann zeta function
and the Dirichlet $L$-function formed with the non-principal character modulo $4$.

\subsection{Sums of Two Squares in Short Intervals}

By (\ref{LandauThm}), the average gap between two consecutive sums of two squares is about $K^{-1}\sqrt{\log x}$,
hence \emph{naively}, one would expect that if
\begin{equation}\label{eq:rangeofparameters}
\lim\limits_{x\to \infty}\frac{\phi(x)}{\sqrt{\log x}}=\infty \qquad \mbox{and} \qquad \phi(x)<x,
\end{equation} 
then the mean value of $b(n)$ in the interval $\{n\in\mathbb{Z}:|n-x|\leq\phi(x)\}$ is
\begin{equation}\label{eq:sos_shortintervals}
\left<b(n)\right>_{|n-x|\leq \phi(x)} \sim K\frac{1}{\sqrt{\log x}}, \qquad x\to \infty.
\end{equation}
The problem of estimating the mean value of $b(n)$ in such intervals
has a long history. 

When restricting to all $x$ but a set of asymptotic density $0$, we have the correct upper and lower bounds, up to constants: See
Friedlander \cite{Friedlander1,Friedlander2} and Hooley \cite{Hooley4} for upper bounds;   Plaskin \cite{Plaskin}, Harman \cite{Harman}, and Hooley \cite{Hooley4} for lower bounds. See Iwaniec \cite{Iwaniec} for the application of the half dimensional sieve to this problem and the exposition \cite[\S14.3]{FI}. 

For all $x$, we have a Maier type phenomenon: Balog and Wooley \cite{BalogWooley} show that for $\phi(x) = (\log x)^A$, $A>\frac{1}{2}$, there exist sequences $x^{+}_k$ and $x^{-}_k$ tending to $\infty$ such that $\left<b(n)\right>_{|n-x_k^{\pm}|\leq \phi(x_k^{\pm})}$ is asymptotically bigger/smaller than what is expected by \eqref{eq:sos_shortintervals}. Thus, \eqref{eq:sos_shortintervals} cannot be taken so naively, and one must restrict the range \eqref{eq:rangeofparameters}.  

One natural restriction is to $\phi(x)=x^{\epsilon}$ with fixed $0<\epsilon <1$. It is  a folklore conjecture that \eqref{eq:sos_shortintervals} should hold; i.e., that for any fixed $0<\epsilon <1$: 
\begin{equation}\label{eq:sos_shortintervals_conj}
\left< b(n)\right>_{|n-x|\leq x^{\epsilon}}\sim K\frac{1}{\sqrt{\log x}}, \qquad x\to \infty. 
\end{equation}
Using methods of Ingham, Montgomery, and Huxley for primes, one can confirm this conjecture for $\epsilon>\frac{7}{12}$ unconditionally and for $\epsilon>\frac{1}{2}$ assuming the Riemann Hypothesis for both the Riemann zeta function and the Dirichlet $L$-function formed with the non-principal character modulo $4$, see \cite{Hooley3}.

\subsection{Landau Theorem in Function Fields}
The classical analogy between number fields and global function fields
translates problems about the integers into problems for polynomials over finite fields,
see \cite{Rudnick} for 
the classical analogue of the Prime Number Theorem and a survey of some of the recent work in this area.
In this note, we will study a function field analogue of sums of two squares in short intervals.

Let $q$ be an odd prime power and let $\FF_q[T]$ be the ring of polynomials over a finite field $\FF_q$ with $q$ elements. We denote by  $\M_{n,q}\subseteq \FF_q[T]$ the subset of monic polynomials of degree $n$. 
Following \cite{BSW},
the analogue of a sum of two squares that we will consider in this study is a polynomial of the form 
\[
f=A^2 + T B^2, \qquad A,B\in \FF_q[T].
\]
In other words, we consider norms from the ring $\FF_q[\sqrt{- T}]$, which we take as the analogue of $\ZZ[i]$. 
(We could as well study polynomials of the form $f=A^2-\alpha TB^2$ with a fixed $\alpha\in\mathbb{F}_q^\times$,
but in order to keep the presentation simple, we restrict to $\alpha=-1$.)
We define for $f\in \M_{n,q}$:
\[
b_q(f) = 
\begin{cases}
1,& f =A^2 + TB^2\\
0,& \mbox{otherwise.}
\end{cases}
\]
The analogue of Landau's theorem \eqref{LandauThm} in function fields should give the asymptotic of the mean value 
$$
 \left<b_q(f)\right>_{f\in \M_{n,q}} := \frac{1}{\#\mathcal{M}_{n,q}}\sum_{f\in\mathcal{M}_{n,q}}b_q(f)
$$
as $q^n\to \infty$. 
We note that $q^n$ has several ways to tend to infinity and the asymptotic value is different in different limits, see \cite{BSW}.
In this work we will be interested in the range of parameters when $q$ is much larger than $n$. In this limit, 
a consequence of a result of the second author, Smilansky, and Wolf \cite[Thm.~1.2]{BSW}, says that
\begin{equation}\label{eqSWL}
\left< b_q(f) \right>_{f\in \M_{n,q}} = \frac{1}{4^n}\binom{2n}{n} + O_n(q^{-1}),
\end{equation}
where the implied constant depends only on $n$

\subsection{Sums of Two Squares in Short Intervals in $\FF_q[T]$}
On $\FF_q[T]$ we have the norm function 
\[
\|h\| = q^{\deg h} \quad \mbox{and} \quad \|0\|=0.
\]
Thus, following \cite{KeatingRudnick}, for $0<\epsilon<1$ and $f_0\in\mathcal{M}_{n,q}$, we consider 
\[
\{ f\in \FF_q[T]: \|f-f_0\| \leq \|f_0\|^{\epsilon}\} = \{f_0+h: h\in\mathbb{F}_q[T],\deg h\leq \epsilon \deg f_0\}
\]
as the analogue of $\{n\in\mathbb{Z}:|n-x|\leq x^{\epsilon}\}$ in \eqref{eq:sos_shortintervals_conj}.
Our main result in this work is a function field analogue of \eqref{eq:sos_shortintervals_conj} in the limit $q\rightarrow\infty$:
\begin{theorem}\label{maintheorem}
For odd $q$, $n>2$, $1>\epsilon \geq \frac{2}{n}$, and $f_0\in \M_{n,q}$ we have 
\begin{equation}\label{eqnmaintheorem}
\left<b_q( f) \right>_{\|f-f_0\|\leq\|f_0\|^{\epsilon}} = \frac{1}{4^n}\binom{2n}{n} + O_n(q^{-1/2}),
\end{equation}
where 
the implied constant depends only on $n$.
\end{theorem}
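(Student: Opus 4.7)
The plan is Galois-theoretic. The condition that $f\in\M_{n,q}$ is a sum $A^2+TB^2$ will be translated into a condition on how Frobenius acts on the $2n$ roots of the auxiliary polynomial $f(-S^2)\in\FF_q[S]$; this action lies inside the hyperoctahedral group $W_n=(\ZZ/2)^n\rtimes S_n$ of order $2^nn!$ on the $n$ unordered root-pairs, and the main term in \eqref{eqnmaintheorem} will emerge from an effective Chebotarev density theorem applied to the family $\{f_0+h\}$ combined with a density computation in $W_n$.

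\emph{Step 1: Reformulation.} Substituting $T=-S^2$ gives $A^2+TB^2=A(-S^2)^2-S^2B(-S^2)^2=g(S)g(-S)$ with $g(S)=A(-S^2)+SB(-S^2)$; conversely any such factorization yields an expression $f=A^2+TB^2$. Hence $f$ is a sum of two squares iff $f(-S^2)$ factors as $g(S)g(-S)$ in $\FF_q[S]$. An $S\mapsto-S$-symmetry analysis of the irreducible factorization over $\FF_q[S]$ shows that, for squarefree $f$ not divisible by $T$, this is equivalent to: no irreducible factor of $f(-S^2)$ is self-conjugate under $S\mapsto-S$; equivalently, every Frobenius cycle on the $n$ root-pairs $\{\pm\alpha_i\}$ is \emph{positive} in the hyperoctahedral sense (product of its signs equals $+1$).

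\emph{Step 2: Generic Galois group.} Let $k=\lfloor\epsilon n\rfloor\geq 2$ and parametrize the short interval by $h=\sum_{i=0}^{k}h_iT^i\in\FF_q^{k+1}$. Consider the generic polynomial
\[F(S)=(f_0+h)(-S^2)\in\FF_q(h_0,\dots,h_k)[S].\]
Since $F(-S)=F(S)$ has $2n$ roots in pairs $\{\pm\alpha_i\}$, the Galois group $\Gal(F/\FF_q(h_0,\dots,h_k))$ embeds into $W_n$. The key lemma, advertised in the abstract, asserts that this embedding is an \emph{isomorphism}. The hypothesis $k\geq 2$ (i.e.\ $\epsilon\geq 2/n$) is used here: enough free parameters are needed to generate both the symmetric quotient $S_n$ and the sign-flip kernel $(\ZZ/2)^n$.

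\emph{Step 3: Chebotarev and density in $W_n$.} With the generic Galois group identified, the Lang--Weil/Deligne bounds applied to the $W_n$-cover of $\AA^{k+1}$ defined by $F(S)$ yield an effective Chebotarev estimate
\[\#\{h\in\FF_q^{k+1}:\Fr_h\in C\}=\frac{|C|}{|W_n|}\,q^{k+1}+O_n(q^{k+1/2})\]
for each conjugacy class $C\subset W_n$; the non-squarefree and $T$-divisible loci, being proper closed subvarieties of $\AA^{k+1}$, contribute only $O_n(q^k)$ and are absorbed into the error. Summing over $C$ with all signed cycles positive and using the centralizer order $2^{\ell(\lambda)}z_\lambda$ in $W_n$ for an element of all-positive cycle type $\lambda$,
\[\sum_{\lambda\vdash n}\frac{1}{2^{\ell(\lambda)}z_\lambda}=[x^n]\exp\!\Bigl(\sum_{m\geq 1}\frac{x^m}{2m}\Bigr)=[x^n](1-x)^{-1/2}=\frac{1}{4^n}\binom{2n}{n},\]
yielding the claimed main term with error $O_n(q^{-1/2})$.

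\emph{Main obstacle.} The heart of the argument is Step 2. A priori $\Gal(F)$ is contained in $W_n$ only, and could be a proper subgroup since merely $k+1\leq n+1$ of the coefficients of $f$ are free; one must show that the restricted family is still generic enough to realize all of $W_n$---in particular, to produce a single transposition on pairs (via a controlled discriminant specialization) together with an independent sign flip. The remaining ingredients---the effective Chebotarev theorem, the density identity, and the estimate for the non-generic locus---are standard once this group-theoretic input is in place.
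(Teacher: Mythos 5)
Your overall architecture is the same as the paper's: translate $b_q(f)=1$ into the condition that Frobenius, viewed inside the hyperoctahedral group $C_2\wr S_n$ acting on the $n$ root-pairs of $f(-S^2)$, has all cycle sign-products equal to $+1$ (this is the Bary-Soroker--Smilansky--Wolf criterion, which the paper cites and you re-derive via the factorization $f(-S^2)=g(S)g(-S)$), then apply an explicit Chebotarev theorem to the parametrized family over $\mathbb{A}^{m+1}$ and compute the density of the ``all-positive'' set. Your Steps 1 and 3 are essentially sound, and your generating-function evaluation $\sum_{\lambda\vdash n} 1/(2^{\ell(\lambda)}z_\lambda)=\frac{1}{4^n}\binom{2n}{n}$ is correct (the paper obtains the same value from the Ewens/Karlin--McGregor formula).

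However, your Step 2 is a genuine gap rather than a deferred routine check: you only \emph{assert} that $\Gal\bigl((f_0+h)(-S^2)\mid\FF_q(h_0,\dots,h_m)\bigr)\cong C_2\wr S_n$, and your closing paragraph concedes that this is the main obstacle, offering only the hope of exhibiting a transposition together with an independent sign flip. That computation is the actual content of the theorem and occupies all of Section~4 of the paper: first $\Gal(f\mid K)\cong S_n$ is obtained from a result of Bank--Bary-Soroker--Rosenzweig (this is where $m=\lfloor\epsilon n\rfloor\geq 2$ enters), and then one must show $[M:L]=2^n$, i.e.\ that the roots $y_1,\dots,y_n$ are square-independent in the splitting field $L$. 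The paper does this by proving that $f(0)=A_0$ and ${\rm discr}(f)$ are square-independent in $K$ (using that ${\rm discr}(f)$, as a polynomial in $A_0$, is not divisible by $A_0$), that $f(0)$ and $y_1$ are square-independent in $K(y_1)$ (rationality of the hypersurface $f=0$), a norm argument down the tower $L\supseteq K(y_1)\supseteq K$, and finally the classification of $S_n$-invariant subspaces of $\FF_2^n$ together with Kummer theory. Your sketched shortcut does not substitute for this: a subgroup of $C_2\wr S_n$ surjecting onto $S_n$ and containing one sign-flipped transposition is not automatically the whole group; one needs control of the possible $S_n$-invariant kernels inside $C_2^n$, which is exactly what the invariant-subspace lemma provides. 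A smaller omission tied to the same step: the Chebotarev bound with error $O_n(q^{-1/2})$ requires the splitting field to be regular over $\FF_q$ (arithmetic equals geometric monodromy); in the paper this follows because the generic Galois group computation works verbatim over $\overline{\FF}_q$, and your argument would need to record this as well.
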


Note that the error term in (\ref{eqSWL}) is smaller than in (\ref{eqnmaintheorem}).
However, the method from \cite{BSW} fails here.
For $\epsilon<\frac{2}{n}$, (\ref{eqnmaintheorem}) no longer holds, as we show in Section~\ref{sec:smallEpsilon}.

\subsection{Methods}

Our approach is based on the function field analogue of Fermat's theorem \cite[Thm.~2.5]{BSW}:
\begin{theorem}\label{thm:representable}
Let $f\in\mathcal{M}_{n,q}$. Then $b_q(f)=1$ if and only if in the prime factorization of $f$,
every prime polynomial $P\in\mathbb{F}_q[T]$ with $P(-T^2)\in\mathbb{F}_q[T]$ irreducible
appears with even multiplicity.
\end{theorem}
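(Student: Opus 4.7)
The approach is to recognize $b_q(f)=1$ as the condition that $f$ is a norm in the quadratic ring extension $S/R$, where $R=\mathbb{F}_q[T]$ and $S=\mathbb{F}_q[U]$ with $U^2=-T$. Indeed, for $g=A+UB\in S$ with $A,B\in R$, the nontrivial Galois involution $\sigma\colon U\mapsto -U$ fixes $R$ pointwise and satisfies $N_{S/R}(g)=g\cdot\sigma(g)=A^2-U^2B^2=A^2+TB^2$. The decisive observation is that $S$ is itself a polynomial ring over $\mathbb{F}_q$, hence a UFD; consequently, the image of $N_{S/R}$ is controlled entirely by the splitting behavior of primes of $R$ in $S$ together with the restriction of the norm to units, namely $c\mapsto c^2$ on $\mathbb{F}_q^\times$.

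Next I would classify the primes of $R$ by their behavior in $S$. Since $(T)=(U)^2$ in $S$, the prime $T$ is ramified and equals $N_{S/R}(U)$. For any other monic irreducible $P\in R$ of degree $d$, fix a root $\alpha\in\mathbb{F}_{q^d}$; then the residue ring $S/PS\cong\mathbb{F}_{q^d}[U]/(U^2+\alpha)$ is a field iff $-\alpha$ is a non-square in $\mathbb{F}_{q^d}^\times$, which is exactly the condition for $P$ to be inert. On the other hand, the roots of $P(-T^2)\in\mathbb{F}_q[T]$ in $\overline{\mathbb{F}_q}$ are the $\pm\sqrt{-\alpha_i}$ over the roots $\alpha_i$ of $P$; since $-\alpha\in\mathbb{F}_{q^d}$ always has a square root in $\mathbb{F}_{q^{2d}}$, a degree comparison yields that $P(-T^2)$ is irreducible iff $\sqrt{-\alpha}\notin\mathbb{F}_{q^d}$, i.e., iff $-\alpha$ is a non-square in $\mathbb{F}_{q^d}^\times$. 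Hence the inert primes of $R$ are exactly those $P$ for which $P(-T^2)$ is irreducible.

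Multiplicativity of $N_{S/R}$ and unique factorization in $S$ then finish the proof. For each split prime $P=\mathfrak{p}\bar{\mathfrak{p}}$ one has $P=N_{S/R}(\mathfrak{p})$; the ramified $T$ is a norm; and for an inert $P$ the ideal $(P)$ is prime and $\sigma$-stable in $S$, so $N_{S/R}(P)=P^2$ and only even powers of $P$ lie in the image. Combined with $N_{S/R}(\mathbb{F}_q^\times)=(\mathbb{F}_q^\times)^2\ni 1$, this shows that a monic $f\in R$ is a norm iff every inert prime divides $f$ to even multiplicity, which is precisely the claim. The main technical step to be verified carefully is the equivalence ``$P$ inert $\iff$ $P(-T^2)$ irreducible'', for which the degree/Frobenius-orbit analysis indicated above is the key ingredient.
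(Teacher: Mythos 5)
First, a point of reference: the paper does not prove this statement at all --- it is imported verbatim from the cited source [BSW, Thm.~2.5] --- so your proposal can only be compared with the standard argument given there, and indeed you take exactly that route: interpret $b_q(f)=1$ as $f$ being a norm from $S=\mathbb{F}_q[U]$, $U^2=-T$, use that $S$ is a UFD, and match inert primes of $R=\mathbb{F}_q[T]$ with those $P$ for which $P(-T^2)$ is irreducible. Your treatment of the equivalence ``$P$ inert $\iff P(-T^2)$ irreducible'' (via $S/PS\cong\mathbb{F}_{q^d}[U]/(U^2+\alpha)$ and the root $\pm\sqrt{-\alpha}$ degree count) is correct, as is the ``only if'' direction via $\sigma$-stability of $(Q)$ for inert $Q$, which forces $v_Q(N(g))=2\,v_{QS}(g)$.

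There is, however, one step you assert rather than prove, and it is precisely the place where the unit group matters: for a split monic prime you write $P=\mathfrak{p}\bar{\mathfrak{p}}=N_{S/R}(\mathfrak{p})$, but unique factorization in $S$ only gives $P=c\,\pi\sigma(\pi)=c\,N_{S/R}(\pi)$ with some $c\in\mathbb{F}_q^\times$, and since the norm on units is $c\mapsto c^2$, a non-square $c$ cannot be absorbed; your remark that $N_{S/R}(\mathbb{F}_q^\times)=(\mathbb{F}_q^\times)^2$ does not by itself rule this out. This is the function-field analogue of the positivity step in the classical $\mathbb{Z}[i]$ proof of Fermat's theorem, and it does need an argument because the norm is not surjective on units. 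The repair is short: $\deg A^2$ is even and $\deg TB^2$ is odd, so there is no cancellation of leading terms and every nonzero norm $A^2+TB^2$ has \emph{square} leading coefficient; comparing leading coefficients in $P=c\,N(\pi)$ (or, alternatively, in the final identity expressing the monic $f$ as a unit times a norm) forces $c$ to be a square, say $c=d^2$, whence $P=N(d\pi)$. With this observation inserted, your assembly of $f$ from $T=N(U)$, split monic primes, and even powers $Q^{2}=N(Q)$ of inert primes closes the ``if'' direction, and the proof is complete.
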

In Section~\ref{sec:Frobenius},
we take a `generic' polynomial for the problem,
\[
f_{(A_i)}(T) = f_0 + \sum_{0\leq i\leq \epsilon n} A_iT^i,
\] 
with the $A_i$ variables. 
We use Theorem~\ref{thm:representable} and Galois theory to formulate the property that, under a specialization $(A_i)\mapsto (a_i)$ of the variable coefficients to elements of $\FF_q$, $b_q(f_{(a_i)})=1$,  in terms of the Frobenius element. 
This, based on an explicit Chebotarev theorem, reduces the proof of Theorem \ref{maintheorem} to a calculation of the Galois group of 
$f_{(A_i)}(-T^2)$, which we undertake in Section~\ref{calculation} --
it turns out to be the hyperoctahedral group of order $2^nn!$ (cf.~Section \ref{sec:hyperoctahedral}),
also known as the Coxeter group of type $B_n$, the group of symmetries of the $n$-dimensional hypercube.

\section{The hyperoctahedral group}
\label{sec:hyperoctahedral}

We keep in this section to our setting and do not work in full generality to make the exposition as simple as possible. 

\begin{definition}
Recall that a group $G$ acting on a set $\Omega$ is called a {\em permutation group} if the corresponding map $G\to {\rm Sym}(\Omega)$ is injective (i.e.\ no nontrivial element of $G$ acts trivially on $\Omega$). The regular action of $G$ on itself (i.e.\ via multiplication) always makes $G$ a permutation group.  
\end{definition}

\begin{definition}
Let $G$ be a permutation group on $\Omega$ (with left action), let $C_2=\{\pm1\}$ be the cyclic group of order two, 
and let 
$$
 C_2^{\Omega}:= \{ \xi\colon \Omega \to C_2\}
$$ 
be the 
group of functions from $\Omega$ to $C_2$.
Then $G$ acts (from the right) on $C_2^{\Omega}$ by 
\[
\xi^\sigma(\omega) = \xi(\sigma.\omega), \qquad \sigma\in G, \ \omega\in \Omega.
\]
The corresponding semidirect product 
\[
C_2\wr G := C_2^\Omega\rtimes G
\]
is called the (permutational) {\em wreath product} of $C_2$ and $G$. Its action on  $C_2\times \Omega$ via
\[
(\xi,\sigma).(x,\omega) = (\xi(\sigma.\omega)x,\sigma.\omega), \qquad \xi\in C_2^\Omega, \ \sigma\in G, \ x\in C_2,\ \omega\in\Omega
\]
makes it a permutation group.
In the special case where $G=S_n$ is the symmetric group acting on $[n]:=\{1,\dots,n\}$, 
the group $C_2\wr S_n$ is also called the \emph{hyperoctahedral group}.
\end{definition}

We introduce a subset $X_n\subseteq C_2\wr S_n$ of the hyperoctahedral group that will play a key role in the study that follows:
\begin{equation}\label{Xn}
X_n = \left\{ (\xi,\pi)\in C_2\wr S_n \;:\; \prod_{\omega \in \Omega'}\xi(\omega)=1\;\;\mbox{ for all orbits $\Omega'\subseteq[n]$ of $\pi$}\right\}.
\end{equation}
We compute
the probability that a randomly chosen element of $C_2\wr S_n$ lies in
$X_n$.
For this, recall that a {\rm partition} $\lambda\vdash n$ of $n$ is a tuple $\lambda=(\lambda_1,\dots,\lambda_n)$ with $\sum_{j=1}^n j\lambda_j=n$.
The {\em cycle type} of a permutation $\pi\in S_n$ is $\lambda(\pi):=(\lambda_1,\dots,\lambda_n)\vdash n$,
where $\lambda_j$ is the number of orbits of $\pi$ of length $j$.
\begin{lemma}
We have 
\begin{equation}\label{lem:Xn}
 \frac{\#X_n}{\#C_2\wr S_n} = \frac{1}{4^n}\binom{2n}{n}. 
\end{equation}

\end{lemma}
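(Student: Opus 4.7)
The plan is to count $\#X_n$ by summing over $\pi \in S_n$, using the fact that for each fixed $\pi$ the condition on $\xi$ decouples into independent constraints, one per orbit of $\pi$.

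First I would fix $\pi \in S_n$ and count the $\xi \in C_2^{[n]}$ with $(\xi,\pi) \in X_n$. If $\pi$ has $c(\pi)$ orbits of sizes $s_1,\dots,s_{c(\pi)}$, then $\xi$ decomposes as a tuple of functions on each orbit, and the condition that $\prod_{\omega \in \Omega'}\xi(\omega) = 1$ for each orbit $\Omega'$ is a single nontrivial character equation per orbit. Hence the number of valid $\xi$ restricted to an orbit of size $s$ is exactly $2^{s-1}$, and the total number is
\[
\prod_{i=1}^{c(\pi)} 2^{s_i-1} \;=\; 2^{n-c(\pi)}.
\]
Summing over $\pi$ gives $\#X_n = \sum_{\pi \in S_n} 2^{n-c(\pi)}$, so
\[
\frac{\#X_n}{\#C_2\wr S_n} \;=\; \frac{\sum_{\pi\in S_n} 2^{n-c(\pi)}}{2^n\, n!} \;=\; \frac{1}{n!} \sum_{\pi\in S_n} 2^{-c(\pi)}.
\]

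The next step is to evaluate $\sum_{\pi\in S_n} x^{c(\pi)}$ at $x = 1/2$, using the classical identity
\[
\sum_{\pi\in S_n} x^{c(\pi)} \;=\; x(x+1)(x+2)\cdots(x+n-1),
\]
which is the generating function for unsigned Stirling numbers of the first kind (and can be proved by the standard induction on $n$ that inserts the element $n$ either into an existing cycle, in $n-1$ ways, or as a new fixed point contributing a factor of $x$). Substituting $x = 1/2$ yields
\[
\sum_{\pi\in S_n} 2^{-c(\pi)} \;=\; \prod_{k=0}^{n-1} \frac{2k+1}{2} \;=\; \frac{(2n-1)!!}{2^n} \;=\; \frac{(2n)!}{4^n\, n!}.
\]

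Dividing by $n!$ gives $\#X_n/\#C_2\wr S_n = (2n)!/(4^n (n!)^2) = \binom{2n}{n}/4^n$, as claimed. No step presents a real obstacle; the only subtle point is verifying that for each orbit the linear constraint on $\xi$ cuts the $2^{s}$ possibilities exactly in half, which is immediate since the product map $C_2^{s} \to C_2$ is a surjective homomorphism.
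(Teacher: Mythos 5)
Your argument is correct, and its first half is exactly the paper's: for a fixed $\pi$ with $c(\pi)$ orbits, the per-orbit parity condition cuts the choices of $\xi$ in half on each orbit, giving $2^{n-c(\pi)}$ admissible $\xi$ (the paper writes this as $\prod_j 2^{(j-1)\lambda_j}$ in terms of the cycle type $\lambda$). Where you diverge is in evaluating the resulting sum. The paper groups permutations by cycle type, obtains $\sum_{\lambda\vdash n}\prod_{j=1}^n\frac{1}{\lambda_j!(2j)^{\lambda_j}}$, and then appeals to an external identity of Karlin--McGregor (the Ewens sampling formula with parameter $\theta=1/2$) to identify this with $\frac{1}{4^n}\binom{2n}{n}$. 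You instead sum directly over $\pi\in S_n$ and evaluate $\sum_{\pi}x^{c(\pi)}=x(x+1)\cdots(x+n-1)$, the generating function for unsigned Stirling numbers of the first kind, at $x=1/2$, getting $\frac{(2n-1)!!}{2^n}=\frac{(2n)!}{4^n n!}$ and hence the claim after dividing by $n!$. Your route is more elementary and self-contained, replacing the citation by a classical identity with a one-line inductive proof; the paper's route, by passing through cycle types and the Ewens formula, makes the connection to the Ewens measure explicit, which is the form reused later in the paper (in the proof of Theorem~\ref{thm:2parameter}, where the same cycle-type computation is carried out for the affine group instead of $S_n$). Both computations are valid and give the same answer.
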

\begin{proof}
For each partition $\lambda\vdash n$, the number of $\pi\in S_n$ with cycle type $\lambda$ is 
$$
 \frac{n!}{1^{\lambda_1}\cdots n^{\lambda_n}\cdot\lambda_1!\cdots\lambda_n!}=n!\cdot\prod_{j=1}^n\frac{1}{\lambda_j!j^{\lambda_j}},
$$
see e.g.~\cite[\S14.3]{AS}. 
If $\pi\in S_n$ has cycle type $\lambda$, then out of the $2^n$ many $(\xi,\pi)\in C_2\wr S_n$, 
there are 
$$
 \prod_{j=1}^n 2^{(j-1)\lambda_j}=2^n\cdot\prod_{j=1}^n\frac{1}{2^{\lambda_j}}
$$
many in $X_n$, as each cycle of $\pi$ determines one function value of $\xi$. Thus, 
$$
 \frac{\#X_n}{\#C_2\wr S_n} = 
\frac{1}{n!\cdot 2^n}\cdot \sum_{\lambda\vdash n}\left(n!\cdot\prod_{j=1}^n\frac{1}{\lambda_j!j^{\lambda_j}}\cdot2^n\cdot\prod_{j=1}^n\frac{1}{2^{\lambda_j}}\right)  
=\sum_{\lambda\vdash n} \prod_{j=1}^n \frac{1}{\lambda_j! (2j)^{\lambda_j}}.
$$
By \cite[Equation~3]{KarlinMcGregor} the RHS equals $\frac{1}{4^n}\binom{2n}{n}$, as needed. (Indeed, taking a sum over all partitions in \cite[Equation~3]{KarlinMcGregor} with $\theta=1/2$ one gets on the one hand $1$, and on the other hand $\sum_{\lambda\vdash n} \prod_{j=1}^n \frac{1}{\lambda_j! (2j)^{\lambda_j}}\LARGE/\frac{1}{4^n}\binom{2n}{n}$.)
\end{proof}

\section{
	Connection with Frobenius elements
}
\label{sec:Frobenius}

We now work in the following setting:
Let $K$ be a field of characteristic $\neq 2$
and let $f\in K[T]$ be a separable polynomial of degree $n$ such that $f(0)\neq 0$.
Let $L$ be a splitting field of $f$ and let 
\[
\Omega=\{\omega_1, \ldots, \omega_n\} \subseteq L
\] 
be the set of roots of $f$.
The Galois group $G=\Gal(L|K)$ of $f$ is a permutation group on $\Omega$, 
which gives us an embedding 
\begin{equation}\label{EMGG}
\pi\colon G\rightarrow S_n, \qquad \sigma\mapsto\pi_\sigma
\end{equation} 
that satisfies $\sigma(\omega_i)=\omega_{\pi_\sigma(i)}$
for all $\omega\in G$ and $i\in[n]$.
For each $i$, choose two square roots $\omega_i^{\pm} = \pm \sqrt{-\omega_i}$ 
and let $M=L(\omega_i^{\pm} : i\in[n])$.
We also denote the map ${\rm Gal}(M|K)\rightarrow S_n$, $\sigma\mapsto\pi_{\sigma|_L}$ by $\pi$.

\begin{lemma}\label{lemma:wreathproduct}
The field $M$ is the splitting field of the separable polynomial $f(-T^2)$ and
the homomorphism
\begin{equation}\label{WreathGalois}
 \Theta\colon \Gal(M|K)\to C_2\wr S_n, \qquad \sigma\mapsto (\xi_\sigma,\pi_\sigma),
\end{equation}
where $\xi_\sigma:[n]\rightarrow\{\pm1\}$ is defined by 
$\sigma(\omega_i^+)=\xi_\sigma(\pi_\sigma(i))(\sigma\omega_i)^+$ for all $i$,
equivalently 
\begin{equation}\label{xi:def}
\xi_\sigma(i) = \frac{\sigma((\sigma^{-1}\omega_i)^+)}{\omega_i^+},
\end{equation}
is an embedding.
\end{lemma}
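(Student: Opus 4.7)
The plan is to verify the lemma in three stages: identify the splitting field, check that $\Theta$ is well-defined, and then verify the homomorphism and injectivity properties.

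For the splitting field claim, I would factor $f(T) = c\prod_{i=1}^n(T-\omega_i)$ and substitute to obtain $f(-T^2) = (-1)^n c \prod_{i=1}^n(T-\omega_i^+)(T-\omega_i^-)$. Since $f(0) = c\prod(-\omega_i) \neq 0$ forces $\omega_i\neq 0$ for all $i$, and since $\mathrm{char}(K)\neq 2$, the $2n$ roots $\omega_i^\pm$ are pairwise distinct, so $f(-T^2)$ is separable and its splitting field over $K$ is exactly $M = L(\omega_i^\pm : i\in[n]) = K(\omega_i^\pm : i\in[n])$. In particular $M|K$ is Galois, so $\mathrm{Gal}(M|K)$ is defined.

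For the well-definedness of $\Theta$, I would fix $\sigma$ and $i$ and observe that $\sigma(\omega_i^+)$ is a square root of $\sigma(-\omega_i) = -\omega_{\pi_\sigma(i)}$, hence must equal $\pm\omega_{\pi_\sigma(i)}^+ = \pm(\sigma\omega_i)^+$. This produces a well-defined sign $\xi_\sigma(\pi_\sigma(i))\in\{\pm 1\}$; substituting $\sigma^{-1}\omega_i$ in place of $\omega_i$ and dividing shows that the two displayed formulas for $\xi_\sigma$ agree.

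The homomorphism property is the heart of the argument. I would compute $\sigma\tau(\omega_i^+)$ by first applying $\tau$, using the formula, and then applying $\sigma$, noting that the $\pm 1$ scalars lie in the prime field and are therefore fixed:
\[
\sigma\tau(\omega_i^+) \;=\; \sigma\bigl(\xi_\tau(\pi_\tau(i))\,\omega_{\pi_\tau(i)}^+\bigr) \;=\; \xi_\tau(\pi_\tau(i))\,\xi_\sigma(\pi_\sigma\pi_\tau(i))\,\omega_{\pi_\sigma\pi_\tau(i)}^+.
\]
Comparing with $\sigma\tau(\omega_i^+) = \xi_{\sigma\tau}(\pi_{\sigma\tau}(i))\,\omega_{\pi_{\sigma\tau}(i)}^+$ and reindexing by $j=\pi_{\sigma\tau}(i)=\pi_\sigma\pi_\tau(i)$ yields the identity $\xi_{\sigma\tau}(j)=\xi_\sigma(j)\,\xi_\tau(\pi_\sigma^{-1}(j))$, which is precisely the wreath product multiplication law in the convention of Section~\ref{sec:hyperoctahedral}, so $\Theta$ is a homomorphism. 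Injectivity is then immediate: if $\Theta(\sigma)=(1,\mathrm{id})$, then $\pi_\sigma=\mathrm{id}$ forces $\sigma|_L=\mathrm{id}$, and $\xi_\sigma\equiv 1$ then gives $\sigma(\omega_i^+)=\omega_i^+$ for each $i$, so $\sigma$ fixes all of $M=L(\omega_1^+,\dots,\omega_n^+)$.

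The main obstacle I anticipate is purely bookkeeping: making sure the left/right action conventions of the wreath product are tracked carefully, so that $\pi_\sigma^{-1}$ (rather than $\pi_\sigma$) appears in the twisted component of the product $\Theta(\sigma)\Theta(\tau)$. Once that is pinned down, the three steps above are formal.
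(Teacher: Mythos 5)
Your proposal is correct and follows essentially the same route as the paper: separability and identification of the splitting field from $f(0)\neq 0$, $\mathrm{char}(K)\neq 2$ and separability of $f$, a direct computation for the homomorphism property (which the paper delegates to \cite[Lemma 3.7]{B2012}), and the same injectivity argument. Your explicit cocycle identity $\xi_{\sigma\tau}(j)=\xi_\sigma(j)\xi_\tau(\pi_\sigma^{-1}(j))$ is indeed the multiplication law compatible with the permutation action of $C_2\wr S_n$ on $C_2\times[n]$ used in Lemma \ref{lem:Theta}, so the bookkeeping is handled correctly.
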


\begin{proof}
The assumptions that $f(0)\neq0$ and that $f$ is separable imply that $f(-T^2)$ is separable.
It is clear that $M$ is the splitting field of $f(-T^2)$.
Direct computation shows that $\Theta$ is a homomorphism, 
see e.g.~\cite[Lemma 3.7]{B2012}.
Clearly, $\Theta$ is injective: If $(\xi_\sigma,\pi_{\sigma})$ is trivial,
then $\sigma\omega_i=\omega_i$ and $\xi_\sigma(\pi_\sigma(i))=1$,
hence $\sigma(\omega_i^+)=\omega_i^+$ for all $i$,
and therefore $\sigma={\rm id}_M$.
\end{proof}

\begin{lemma}\label{lem:Theta}
The following diagram commutes: 
$$
 \xymatrix{
  {\rm Gal}(M|K)\ar@{->}[d]\ar@{->}[r]^\Theta &  C_2\wr S_n\ar@{->}[d] \\
  {\rm Sym}(\{\omega_i^{\pm} : i\in[n] \})\ar@{->}[r]^{\quad\eta} & {\rm Sym}(C_2\times[n]) \\
 }
$$
Here the vertical arrows are the embeddings induced by the permutation action, $\Theta$ is defined in \eqref{WreathGalois}, and 
the isomorphism $\eta$ is induced from the bijection $\beta\colon\{\omega_i^{\pm}:i\in[n]\}\to C_2\times[n]$ given by  $\beta(\omega_i^\pm)=(\pm1,i)$.
\end{lemma}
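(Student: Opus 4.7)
The plan is a direct element-by-element verification of commutativity. First I would note that $\beta$ is a well-defined bijection: because $f$ is separable with $f(0)\neq 0$, the $n$ roots $\omega_1,\dots,\omega_n$ are distinct and nonzero, so in characteristic $\neq 2$ the two square roots $\omega_i^\pm=\pm\sqrt{-\omega_i}$ are genuinely distinct elements of $M$, and altogether $\{\omega_i^\pm:i\in[n]\}$ has $2n$ elements in bijection with $C_2\times[n]$ via $\omega_i^\pm\mapsto(\pm1,i)$. So $\eta$ makes sense as a group isomorphism $\mathrm{Sym}(\{\omega_i^\pm\})\to\mathrm{Sym}(C_2\times[n])$ obtained by transport of structure along $\beta$.

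Next, fix $\sigma\in\Gal(M|K)$ and $(\epsilon,i)\in C_2\times[n]$; the task is to show that both composites in the diagram send $(\epsilon,i)$ to the same element. Going right then down: $\Theta(\sigma)=(\xi_\sigma,\pi_\sigma)$ acts on $C_2\times[n]$ by the permutational wreath product action, so
$$
(\xi_\sigma,\pi_\sigma).(\epsilon,i)=\bigl(\xi_\sigma(\pi_\sigma(i))\,\epsilon,\,\pi_\sigma(i)\bigr).
$$
Going down then right: I would use the defining relation $\sigma(\omega_i^+)=\xi_\sigma(\pi_\sigma(i))\,\omega_{\pi_\sigma(i)}^+$ from the statement of Lemma~\ref{lemma:wreathproduct}, together with $\omega_i^-=-\omega_i^+$ (valid since $\mathrm{char}\,K\neq 2$), to get the uniform formula
$$
\sigma(\omega_i^{\epsilon})=\xi_\sigma(\pi_\sigma(i))\cdot\epsilon\cdot\omega_{\pi_\sigma(i)}^{+}=\omega_{\pi_\sigma(i)}^{\,\xi_\sigma(\pi_\sigma(i))\epsilon}.
$$
Applying $\beta$ then yields exactly $\bigl(\xi_\sigma(\pi_\sigma(i))\epsilon,\pi_\sigma(i)\bigr)$, matching the other composite.

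Since this holds for every $\sigma$ and every $(\epsilon,i)$, the diagram commutes. The only subtlety I expect is sign bookkeeping — in particular, recognizing that the case $\epsilon=-1$ really does follow from the case $\epsilon=+1$ by applying $\sigma$ to $-\omega_i^+$ and pulling the minus sign through the $C_2$-factor — but there is no serious obstacle, and the verification is essentially a one-line chase after the definitions have been unpacked.
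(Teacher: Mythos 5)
Your verification is correct and is essentially the paper's own proof: the paper likewise checks on elements that $\Theta(\sigma).\beta(\omega_i^\pm)=(\pm\xi_\sigma(\pi_\sigma(i)),\pi_\sigma(i))=\beta(\sigma(\omega_i^\pm))$ for all $\sigma$ and $i$, which is exactly your sign-bookkeeping computation. The only difference is that you spell out the well-definedness of $\beta$ and the case $\epsilon=-1$ explicitly, which the paper leaves implicit.
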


\begin{proof}
We have 
$$
 \Theta(\sigma).\beta(\omega_i^\pm) = (\xi_\sigma,\pi_{\sigma}).(\pm1,i) = 
 (\pm\xi_\sigma(\pi_{\sigma}(i)),\pi_{\sigma}(i)) =  \beta(\sigma(\omega_i^\pm))
$$
for all $\sigma\in{\rm Gal}(M|K)$ and all $i$, as claimed.
\end{proof}

\begin{lemma}\label{lem:imageoffrob}
Assume that ${\rm Gal}(M|K)=\left<\phi\right>$ is cyclic and that $f$ is irreducible.
Then $f(-T^2)$ is reducible if and only if
$\prod_{i=1}^n\xi_\phi(i)=1$.
\end{lemma}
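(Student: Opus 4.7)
The plan is to translate the reducibility of $f(-T^2)$ into a statement about the orbit structure of the cyclic group $\langle\Theta(\phi)\rangle$ acting on $C_2\times[n]$, and then to compute the orbit of $(1,1)$ explicitly.

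First, I would note that $f(-T^2)$ is reducible if and only if $\mathrm{Gal}(M|K)$ fails to act transitively on the $2n$ roots $\{\omega_i^\pm : i\in[n]\}$ of $f(-T^2)$. Via Lemma~\ref{lem:Theta}, this is equivalent to $\mathrm{Gal}(M|K)$ (viewed through $\Theta$) failing to act transitively on $C_2\times[n]$. Since the Galois group is cyclic with generator $\phi$, transitivity holds iff the single element $\Theta(\phi)=(\xi_\phi,\pi_\phi)$ acts as a $2n$-cycle on $C_2\times[n]$.

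Second, I would use that $f$ is irreducible: this forces $\mathrm{Gal}(L|K)$ to act transitively on $\Omega$, so $\pi_\phi\in S_n$ is an $n$-cycle on $[n]$. Relabeling if necessary, I can assume $\pi_\phi=(1,2,\dots,n)$. Now I directly iterate the action formula $(\xi,\sigma).(x,i)=(\xi(\sigma.i)x,\sigma.i)$ starting from $(1,1)$:
\begin{equation*}
(1,1)\;\mapsto\;(\xi_\phi(2),2)\;\mapsto\;(\xi_\phi(2)\xi_\phi(3),3)\;\mapsto\;\cdots\;\mapsto\;\bigl(\xi_\phi(2)\xi_\phi(3)\cdots\xi_\phi(n)\xi_\phi(1),1\bigr),
\end{equation*}
so after $n$ applications of $\Theta(\phi)$ we return to the first coordinate with sign $\prod_{i=1}^n \xi_\phi(i)$.

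From here the dichotomy is immediate. If $\prod_{i=1}^n\xi_\phi(i)=1$, the orbit of $(1,1)$ has length $n$, and by symmetry $C_2\times[n]$ splits into two orbits of length $n$, so the action is not transitive and $f(-T^2)$ is reducible. If $\prod_{i=1}^n\xi_\phi(i)=-1$, we reach $(-1,1)$ after $n$ steps, and another $n$ steps are needed to return to $(1,1)$, so the orbit has length $2n$, the action is transitive, and $f(-T^2)$ is irreducible. This is really the whole argument; no step looks like a serious obstacle, the only thing to be careful about is matching the conventions of the wreath product action in the definition with the computation above, which is straightforward bookkeeping.
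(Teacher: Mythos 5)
Your argument is correct, but it reaches the conclusion by a somewhat different route than the paper. You characterize reducibility of $f(-T^2)$ by the transitivity criterion: a separable polynomial is irreducible over $K$ exactly when the Galois group acts transitively on its roots, and since the group is cyclic, transitivity is equivalent to the single generator $\Theta(\phi)$ having one orbit on $C_2\times[n]$ (via Lemma~\ref{lem:Theta}); you then track the orbit of $(1,1)$ along the $n$-cycle and read off the accumulated sign $\prod_i\xi_\phi(i)$. The paper instead argues field-theoretically: since ${\rm Gal}(M|K)$ is abelian, $L=K(\omega_1)$ and $M=K(\omega_1^+)$, so $[M:L]\le 2$ with equality iff $f(-T^2)$ is irreducible; hence reducibility means $M=L$, i.e.\ $\phi^n(\omega_1^+)=\omega_1^+$ by Galois correspondence, and the sign condition then follows by computing $\Theta(\phi)^n$ using that $\Theta$ is a homomorphism. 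The two proofs rest on the same underlying computation (your orbit of $(1,1)$ having length $n$ is precisely $\phi^n$ fixing $\omega_1^+$), but your version stays entirely at the level of the permutation action and avoids the degree count and the explicit use of the abelian hypothesis beyond cyclicity, while the paper's version exhibits the field-theoretic meaning of reducibility ($M=L$) and reuses the already-established homomorphism property of $\Theta$ instead of an explicit iteration. Two small points you should make explicit: the transitivity criterion requires $f(-T^2)$ to be separable, which is exactly the first assertion of Lemma~\ref{lemma:wreathproduct} under the standing assumptions ($f$ separable, $f(0)\neq0$, ${\rm char}\,K\neq2$); and the relabeling that puts $\pi_\phi=(1,2,\dots,n)$ only permutes the values of $\xi_\phi$, so the product $\prod_{i=1}^n\xi_\phi(i)$ is unaffected. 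Neither is a gap, just bookkeeping worth a sentence.
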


\begin{proof}
Let $\Theta(\phi)=(\xi,\pi)$.
Since $f$ is irreducible of degree $n$ and ${\rm Gal}(M|K)=\left<\phi\right>$, 
we have that ${\rm Gal}(M|L)=\left<\phi^n\right>$ and $1,\phi,\dots,\phi^{n-1}$ are  representatives of ${\rm Gal}(M|K)/{\rm Gal}(M|L)\cong G$.
Moreover, the fact that $f$ is irreducible implies that 
$\pi$ is an $n$-cycle.

Since $\Gal(M|K)$ is abelian, $L=K(\omega_1)$ and $M=K(\omega_1^+)$. In particular, 
\[
[M:L]= \frac{[M:K]}{[L:K]}\leq \frac{2\deg f}{\deg f}=2,
\]
and equality holds if and only if $f(-T^2)$ is irreducible. Hence,
\begin{eqnarray*}
f(-T^2)\mbox{ is reducible } &\Longleftrightarrow& M=L  \\
 &\Longleftrightarrow& \omega_1^+\in L\\
&\Longleftrightarrow& \phi^n(\omega_1^+)=\omega_1^+\\
&\Longleftrightarrow& \xi_{\phi^n}(\pi_{\phi^n}(1))=1.
\end{eqnarray*}
Here, the third line follows by Galois correspondence.
Since $\Theta$ is a homomorphism, 
$$
 (\xi_{\phi^n},\pi_{\phi^n})=\Theta(\phi^n)=\Theta(\phi)^n=(\xi,\pi)^n=(\xi\xi^\pi\dots\xi^{\pi^{n-1}},\pi^n)=(\xi\xi^\pi\dots\xi^{\pi^{n-1}},1),
$$
so 
$$
 \xi_{\phi^n}(\pi_{\phi^n}(1))=\prod_{k=0}^{n-1}\xi^{\pi^k}(1)=\prod_{k=0}^{n-1}\xi(\pi^k(1))=\prod_{i=1}^n\xi(i),
$$
where the last equality follows since $\pi$ is an $n$-cycle.
Hence, $f(-T^2)$ is reducible if and only if $\prod_{i=1}^n\xi(i)=1$. 
\end{proof}

The assumption that ${\rm Gal}(M|K)$ is cyclic is satisfied for example
when $K=\mathbb{F}_q$ is a finite field:
In that case, ${\rm Gal}(M|K)$ is generated by the $q$-Frobenius
$\phi_q(x)=x^q$.

\begin{proposition}\label{prop:representable}
Let $q$ be an odd prime power and let $K=\mathbb{F}_q$.
Let $f\in K[T]$ be a separable monic polynomial of degree $n$ with $f(0)\neq 0$,
and let $\Theta$ be as in \eqref{WreathGalois}.  
Then $b_q(f)=1$ 
if and only if $\Theta(\phi_q)\in X_n$.
\end{proposition}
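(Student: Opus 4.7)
The plan is to combine Theorem~\ref{thm:representable} (the function field analogue of Fermat) with Lemma~\ref{lem:imageoffrob} applied separately to each irreducible factor of $f$.

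Since $f$ is separable, every prime appearing in the factorization of $f$ does so with multiplicity one. So Theorem~\ref{thm:representable} reduces $b_q(f)=1$ to the condition that $P(-T^2)$ is reducible for every irreducible factor $P$ of $f$ over $K=\mathbb{F}_q$. The first step is to connect these irreducible factors with the orbit decomposition that appears in the definition \eqref{Xn} of $X_n$. Over a finite field the irreducible factors $P_1,\dots,P_r$ of $f$ correspond bijectively to the orbits $\Omega_1,\dots,\Omega_r$ of the Frobenius $\phi_q$ acting on the root set $\Omega$, i.e.\ to the orbits of $\pi_{\phi_q}\in S_n$ on $[n]$, with $i\in\Omega_j$ iff $\omega_i$ is a root of $P_j$. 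Also, $f(0)\neq 0$ ensures $P_j(0)\neq 0$ for each $j$, so the hypotheses of Lemma~\ref{lem:imageoffrob} are satisfied for each $P_j$.

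Next, for each $j$ I would apply Lemma~\ref{lem:imageoffrob} to the irreducible polynomial $P_j$: let $L_j=K(\omega_i:i\in\Omega_j)\subseteq L$ be its splitting field, and let $M_j=L_j(\omega_i^+:i\in\Omega_j)\subseteq M$ be the splitting field of $P_j(-T^2)$. The restriction map sends $\phi_q$ to a generator of the cyclic Galois group $\Gal(M_j|K)$, and the embedding $\Theta_j\colon\Gal(M_j|K)\to C_2\wr S_{\Omega_j}$ coming from Lemma~\ref{lemma:wreathproduct} applied to $P_j$ is, by the intrinsic definition \eqref{xi:def} of $\xi$, simply the restriction of $\Theta(\phi_q)$ to the index set $\Omega_j$ (the square roots $\omega_i^\pm$ are fixed globally, and both $\phi_q$ and $\omega_i$ for $i\in\Omega_j$ live already in $M_j$). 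Lemma~\ref{lem:imageoffrob} therefore tells us that $P_j(-T^2)$ is reducible if and only if $\prod_{i\in\Omega_j}\xi_{\phi_q}(i)=1$.

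Putting the two steps together, $b_q(f)=1$ holds if and only if $\prod_{i\in\Omega'}\xi_{\phi_q}(i)=1$ for every orbit $\Omega'$ of $\pi_{\phi_q}$ on $[n]$, which is exactly the membership condition $\Theta(\phi_q)\in X_n$ from \eqref{Xn}. The only real point that needs verification is the compatibility of the $\xi$-functions across the factors, i.e.\ that the wreath product data produced by Lemma~\ref{lemma:wreathproduct} applied to each $P_j$ really is the restriction of the one produced by applying it to $f$; this is immediate from formula \eqref{xi:def}, which is defined pointwise in $i$ and uses only data (the roots $\omega_i$, the chosen square roots $\omega_i^+$, and the field automorphism $\sigma$) that behave well under restriction to $M_j$, so I do not expect any serious obstacle here.
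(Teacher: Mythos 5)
Your proposal is correct and follows essentially the same route as the paper's proof: reduce via Theorem~\ref{thm:representable} to reducibility of $P_j(-T^2)$ for each irreducible factor, identify the factors with the orbits of $\pi_{\phi_q}$, apply Lemma~\ref{lem:imageoffrob} factor by factor, and verify via \eqref{xi:def} that the $\xi$-data for each $P_j$ is the restriction of the global $\xi_{\phi_q}$. The compatibility check you single out at the end is exactly the point the paper also verifies explicitly, so there is no gap.
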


\begin{proof}
Write $\Theta(\phi_q)=(\xi,\pi)$ and
let $f=P_1\cdots P_r$ be the prime factorization of $f$. Since $f$ is separable, i.e.~all the $P_i$'s are distinct, Theorem \ref{thm:representable} asserts that $b_q(f)=1$ if and only if
$P_i(-T^2)$ is reducible for all $i$.
The set $\Omega=\{\omega_1,\dots,\omega_n\}$ of roots of $f$ is partitioned as $\Omega=\coprod_{i=1}^r\Omega_i$,
where $\Omega_i=\{\omega_{k_{i1}},\dots,\omega_{k_{in_i}}\}$ is the set of roots of $P_i$.
As each $P_i$ is irreducible, the sets $\{k_{i1},\dots,k_{in_i}\}$ for $i=1,\dots,r$ are exactly the orbits of $\pi$.

By Lemma \ref{lem:imageoffrob},
$P_i(-T^2)$ is reducible if and only if $\prod_{j=1}^{n_i}\xi_i(j)=1$,
where $(\xi_i,\pi_i)=\Theta_i(\phi_q)$ with $\Theta_i$ as in \eqref{WreathGalois} for $P_i(-T^2)$, that is to say,
$$
 \Theta_i\colon{\rm Gal}(M_i|K)\rightarrow C_2\wr S_{n_i},
$$
with 
$M_i$ the splitting field of $P_i(-T^2)$.
However, by \eqref{xi:def}, we have 
$$
 \xi_i(j) = \frac{\phi_q((\phi_q^{-1}\omega_{k_{ij}})^+)}{\omega_{k_{ij}}^+} = \xi(k_{ij})\quad\mbox{ for }j=1,\dots,n_i, 
$$ 
so we see that $\prod_{j=1}^{n_i}\xi_i(j)=\prod_{j=1}^{n_i}\xi(k_{ij})$ is the product over the orbit $\{k_{i1},\dots,k_{in_i}\}$ of $\pi$. 
We conclude that $P_i(-T^2)$ is reducible for all $i$ if and only if $(\xi,\pi)\in X_n$.
\end{proof}

\section{The generic Galois group}\label{calculation}

In this section we compute the Galois group of a suitable generic polynomial.

\begin{definition}
Let $K$ be a field.
We say that $x_1,\dots,x_n\in K^\times$ are {\em square-independent} if their residues in $K^\times/(K^\times)^2$ are $\mathbb{F}_2$-linearly independent,
i.e.\ if the subspace $V\subseteq\mathbb{F}_2^n$ consisting of those $\epsilon=(\epsilon_1,\dots,\epsilon_n)\in\mathbb{F}_2^n$ with 
$$
 \prod_{i=1}^nx_i^{\epsilon_i}\in K^{\times2}
$$ 
is trivial.
Denote 
$$
 w(\epsilon):=\#\{i:\epsilon_i\neq0\}.
$$ 
\end{definition}

The following general lemma is well-known:

\begin{lemma}\label{lem:invariantsubspace}
For $n\in\mathbb{N}$,
consider the standard representation of $S_n$ on $\mathbb{F}_2^n$.
The only invariant subspaces $V\subseteq\mathbb{F}_2^n$ are the following:
\begin{enumerate}
\item $V_0=\{(0,\dots,0)\}$
\item $V_1=\{(0,\dots,0),(1,\dots,1)\}$
\item $V_{n-1}=\{\epsilon\in\mathbb{F}_2^n:w(\epsilon)\equiv 0 \;(\mathrm{mod}\;2)\}$
\item $V_n=\mathbb{F}_2^n$
\end{enumerate}
\end{lemma}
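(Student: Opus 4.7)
The plan is to classify all $S_n$-invariant subspaces $V\subseteq\mathbb{F}_2^n$ by tracking which weights occur among the nonzero elements of $V$. The basic fact driving everything is that $S_n$ acts transitively on the set of vectors of any fixed weight $k\in\{0,1,\ldots,n\}$, so $V$ contains \emph{some} vector of weight $k$ if and only if $V$ contains \emph{every} vector of weight $k$.

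The crux of the argument is a reduction step: if $V$ contains some $v$ with $1\le w(v)\le n-1$, then $V$ contains a vector of weight exactly $2$. Indeed, one can pick indices $i,j$ with $v_i=1$ and $v_j=0$ (possible precisely because $1\le w(v)\le n-1$); the transposition $\tau=(i\ j)\in S_n$ then satisfies $v+\tau(v)=e_i+e_j$, which has weight $2$. By transitivity $V$ contains every $e_i+e_j$, and these span the even-weight subspace $V_{n-1}$ (any vector of even weight $2m$ is a sum of $m$ such pairs), so $V\supseteq V_{n-1}$.

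With this reduction in hand the case analysis closes immediately. If $V$ is trivial, $V=V_0$. If every nonzero element of $V$ has weight exactly $n$, then $V\subseteq\{0,(1,\ldots,1)\}=V_1$, and the presence of a nonzero element forces equality. Otherwise $V$ contains some $v$ with $1\le w(v)\le n-1$, so $V\supseteq V_{n-1}$; since $V_{n-1}$ has codimension one in $\mathbb{F}_2^n$, we must have either $V=V_{n-1}$ or $V=V_n$, and the latter occurs exactly when $V$ also contains an odd-weight vector. I do not foresee a substantive obstacle: the real content is the transposition trick $v+\tau(v)=e_i+e_j$, and the rest is bookkeeping. One should also verify that $V_1$ and $V_{n-1}$ are genuinely $S_n$-invariant, which is immediate since the all-ones vector is fixed by $S_n$ and the parity of $w(\epsilon)$ is $S_n$-invariant.
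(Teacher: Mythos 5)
Your proof is correct and follows essentially the same route as the paper: the key step in both is the transposition trick producing a weight-$2$ vector $\epsilon+\sigma\epsilon\in V$, which forces $V_{n-1}\subseteq V$, after which the codimension-one argument finishes the case analysis. You merely spell out a few details (transitivity on fixed-weight vectors, that weight-$2$ vectors span $V_{n-1}$) that the paper leaves implicit.
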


\begin{proof}
If an invariant subspace $V\subseteq\mathbb{F}_2^n$ is different from $V_0$ and $V_1$,
then there exists $0\neq\epsilon\in V$ with $w(\epsilon)<n$.
Applying a suitable transposition $\sigma\in S_n$, we get some $\epsilon'=\epsilon+\sigma\epsilon\in V$ with $w(\epsilon')=2$.
This immediately implies that $V_{n-1}\subseteq V$, but $V_n/V_{n-1}\cong\mathbb{F}_2$, so either $V=V_{n-1}$ or $V=V_n$.
\end{proof}

\begin{lemma}\label{lemma:sqindepwreathproduct}
Let $K$ be a field with ${\rm char}(K)\neq 2$ 
and $f(T)\in K[T]$ a monic separable polynomial of degree $n$ with $f(0)\neq 0$.
Let $G={\rm Gal}(f(T)|K)$ and let $\pi:G\lra S_n$ be the embedding $\sigma \mapsto \pi_{\sigma}$ defined in \eqref{EMGG}. 
Assume that the image $\pi(G)$ in $S_n$ has only $V_0,V_1,V_{n-1},V_n\subseteq\mathbb{F}_2^n$ as invariant subspaces.
Write $f(T)=\prod_{i=1}^n(T+y_i)$ and let $L=K(y_1,\dots,y_n)$ be the splitting field of $f$.
If $f(0)$ and $y_1$ are square-independent in $L$, then
${\rm Gal}(f(-T^2)|K)\cong C_2\wr G$.
\end{lemma}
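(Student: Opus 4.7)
The plan is to realise ${\rm Gal}(f(-T^2)|K)$ inside $C_2\wr G$ via Lemma~\ref{lemma:wreathproduct} and then force equality by a degree count. Writing the roots of $f$ as $\omega_i=-y_i$, one has $\omega_i^{\pm}=\pm\sqrt{y_i}$, so the splitting field of $f(-T^2)$ is $M=L(\sqrt{y_1},\dots,\sqrt{y_n})$, and Lemma~\ref{lemma:wreathproduct} provides an injection $\Theta\colon{\rm Gal}(M|K)\hookrightarrow C_2\wr S_n$. Since the second coordinate of $\Theta(\sigma)$ equals $\pi_{\sigma|_L}\in\pi(G)$, the image of $\Theta$ is already contained in $C_2\wr G$. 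Because $|C_2\wr G|=2^n|G|=2^n[L:K]$, establishing the isomorphism reduces to showing $[M:L]=2^n$.

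By Kummer theory, $[M:L]=2^n$ holds iff the classes of $y_1,\dots,y_n$ in $L^\times/(L^\times)^2$ are $\mathbb{F}_2$-linearly independent, equivalently iff the subspace
\[
W=\Bigl\{\epsilon\in\mathbb{F}_2^n:\prod_{i=1}^n y_i^{\epsilon_i}\in(L^\times)^2\Bigr\}
\]
is trivial. Since $G$ permutes the $y_i$ via $\pi$ and preserves $(L^\times)^2$ setwise, $W$ is invariant under the standard action of $\pi(G)$ on $\mathbb{F}_2^n$. By the hypothesis on $\pi(G)$ together with Lemma~\ref{lem:invariantsubspace}, $W\in\{V_0,V_1,V_{n-1},V_n\}$.

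The heart of the proof is eliminating the three nontrivial candidates using only the square-independence of $f(0)=\prod y_i$ and $y_1$. The cases $W=V_n$ and $W=V_1$ are immediate: the former contains $(1,0,\dots,0)$, forcing $y_1\in(L^\times)^2$, while the latter contains $(1,\dots,1)$, forcing $f(0)\in(L^\times)^2$; each contradicts the hypothesis. I expect $W=V_{n-1}$ to be the main obstacle, as it does not directly constrain $y_1$ or $f(0)$ individually. The workaround is to note that $(1,1,0,\dots,0)$ and the other weight-$2$ vectors all belong to $V_{n-1}$, so $y_1y_i\in(L^\times)^2$ for every $i\geq 2$; hence every $y_i$ lies in the single coset $y_1(L^\times)^2$, and $f(0)=\prod_{i=1}^n y_i\in y_1^{\,n}(L^\times)^2$. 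Depending on the parity of $n$, this yields either $f(0)\in(L^\times)^2$ or $f(0)y_1\in(L^\times)^2$, both of which contradict the square-independence hypothesis. Therefore $W=V_0$, $[M:L]=2^n$, and $\Theta$ realises the desired isomorphism ${\rm Gal}(f(-T^2)|K)\cong C_2\wr G$.
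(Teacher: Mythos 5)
Your proof is correct and takes essentially the same route as the paper: show that $y_1,\dots,y_n$ are square-independent by viewing $W$ as a $\pi(G)$-invariant subspace and ruling out $V_1$, $V_{n-1}$, $V_n$ via the square-independence of $f(0)$ and $y_1$, then apply Kummer theory to get $[M:L]=2^n$ and conclude by an order count inside $C_2\wr G$. The only cosmetic difference is the elimination of $V_{n-1}$: the paper observes that one of $(1,\dots,1)$ or $(0,1,\dots,1)$ has even weight and is excluded directly, while you use the weight-$2$ vectors and the parity of $n$ --- both arguments work.
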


\begin{proof}
By assumption, $f(0)=y_1\cdots y_n$ and $y_1$ are square-independent in $L$.
In particular, $(1,\dots,1)$ and $(0,1,\dots,1)$ do not lie in the
subspace $V\subseteq\mathbb{F}_2^n$ consisting of those
$\epsilon\in\mathbb{F}_2^n$ with
$\prod_{i=1}^n y_i^{\epsilon_i}\in L^{\times2}$,
which is $\pi(G)$-invariant by assumption.
Therefore, $V=V_0$,
proving that $y_1,\dots,y_n$ are square-independent in $L$.

Hence, by Kummer theory (cf.~\cite[Ch.~VI Thm.~8.1]{Lang}), if $M:=K(\sqrt{y_1},\dots,\sqrt{y_n})$ denotes the splitting field of $f(-T^2)$,
then $[M:L]=2^n$.
The image $H$ of the embedding $\Theta\colon \Gal(M|K)\to C_2\wr S_n$ of Lemma \ref{lemma:wreathproduct} satisfies $H\leq C_2\wr \pi(G)$.
Therefore, 
$$
 \#{\rm Gal}(f(-T^2)|K)=[M:L]\cdot[L:K]=2^n\cdot|G|=\#(C_2\wr G).
$$ 
We conclude that ${\rm Gal}(f(-T^2)|K)\cong C_2\wr G$.
\end{proof}

\begin{lemma}\label{lem:squareindependent}
Let $K$ be a field with ${\rm char}(K)\neq 2$ 
and $f(T)\in K[T]$ a monic polynomial of degree $n$ with ${\rm Gal}(f(T)|K)\cong S_n$.
Write $f(T)=\prod_{i=1}^n(T+y_i)$ and let $L=K(y_1,\dots,y_n)$ be the splitting field of $f$.
Assume that $f(0)$ and ${\rm discr}(f)$ are square-independent in $K$,
and that $f(0)$ and $y_1$ are square-independent in $K(y_1)$.
Then $f(0)$ and $y_1$ are square-independent in $L$.
\end{lemma}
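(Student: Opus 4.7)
The plan is to reformulate the conclusion in terms of the subspace $V\subseteq\FF_2^n$ consisting of those $\epsilon$ with $\prod_{i=1}^ny_i^{\epsilon_i}\in L^{\times 2}$, as introduced in the proof of Lemma~\ref{lemma:sqindepwreathproduct}. This $V$ is $\pi(G)$-invariant by a direct verification, and since $\pi(G)=S_n$ by hypothesis, Lemma~\ref{lem:invariantsubspace} yields $V\in\{V_0,V_1,V_{n-1},V_n\}$. The square-independence of $f(0)$ and $y_1$ in $L$ is equivalent to saying that none of the three vectors $(1,\dots,1)$, $(1,0,\dots,0)$, $(0,1,\dots,1)$ lies in $V$, noting that $f(0)=\prod_iy_i$ and $f(0)y_1\equiv\prod_{i\geq 2}y_i\pmod{L^{\times 2}}$; and since each of $V_1,V_{n-1},V_n$ contains at least one of these vectors, the task reduces to showing $V=V_0$ by excluding those three possibilities.

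The main algebraic ingredient, where I expect the principal difficulty to lie, is a Galois-theoretic identification of the unique quadratic subextensions inside $L$. Since $A_n$ is the only index-$2$ subgroup of $S_n$, the unique quadratic extension of $K$ inside $L$ is $K(\sqrt{\Discr(f)})$; and for $n\geq 3$, since $\Gal(L|K(y_1))\cong S_{n-1}$ has $A_{n-1}$ as its only index-$2$ subgroup, the unique quadratic extension of $K(y_1)$ inside $L$ is $K(y_1)(\sqrt{\Discr(f)})$, where one uses that $\Discr(f)$ and the discriminant of $f(T)/(T+y_1)$ differ by the square factor $f'(-y_1)^2\in K(y_1)^{\times 2}$. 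Consequently, for $\alpha\in K^\times$ the condition $\alpha\in L^{\times 2}$ is equivalent to $\alpha\in K^{\times 2}$ or $\alpha\Discr(f)\in K^{\times 2}$, and the analogous statement holds over $K(y_1)$.

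I would then exclude the three possibilities in turn. Case $V=V_1$: $f(0)\in L^{\times 2}$ forces $f(0)\in K^{\times 2}$ or $f(0)\Discr(f)\in K^{\times 2}$, contradicting the square-independence of $f(0)$ and $\Discr(f)$ in $K$. Case $V=V_n$: $y_1\in L^{\times 2}$ gives either $y_1\in K(y_1)^{\times 2}$---contradicting the square-independence of $f(0)$ and $y_1$ in $K(y_1)$---or $y_1\Discr(f)\in K(y_1)^{\times 2}$; in the latter subcase, taking $N_{K(y_1)/K}$ and using $N(y_1)=f(0)$ yields $f(0)\Discr(f)^n\in K^{\times 2}$, which by parity of $n$ collapses to $f(0)\in K^{\times 2}$ or $f(0)\Discr(f)\in K^{\times 2}$, again contradicting the hypothesis in $K$. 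Case $V=V_{n-1}$: for $n$ even, $(1,\dots,1)\in V$ reduces to the $V_1$ case; for $n$ odd, $(0,1,\dots,1)\in V$ gives $f(0)y_1\in L^{\times 2}$, and the analogous norm argument produces $f(0)^{n+1}\Discr(f)^n\in K^{\times 2}$, which for $n$ odd forces $\Discr(f)\in K^{\times 2}$, another contradiction. The small cases $n=2$ (where $L=K(y_1)$) and $n=3$ (where $[L:K(y_1)]=2$ so the unique quadratic subextension of $L/K(y_1)$ is $L$ itself) fit the same template directly.
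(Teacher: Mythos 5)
Your proof is correct and follows essentially the same route as the paper's: suppose one of $f(0)$, $y_1$, $f(0)y_1$ is a square in $L$, identify the unique quadratic extension of $K(y_1)$ inside $L$ as the fixed field of $A_{n-1}$, and take the norm down to $K$ to contradict the square-independence of $f(0)$ and ${\rm discr}(f)$ by a parity argument. The only cosmetic differences are your (harmless but redundant) detour through the invariant-subspace classification to organize the three cases, and your replacement of ${\rm discr}(f_1)$ by ${\rm discr}(f)$ modulo $K(y_1)^{\times 2}$ via the square factor $f'(-y_1)^2$, which lets you avoid the paper's computation ${\rm N}_{K(y_1)|K}({\rm discr}(f_1))={\rm discr}(f)^{n-2}$.
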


\begin{proof}
Let $K_1=K(y_1)$ and $f_1(T)=f(T)/(T+y_1)\in K_1[T]$.
Let $x=f(0)$ and $y=y_1$,
and suppose that 
$x^ay^b\in L^{\times2}$ 
with $a,b\in\{0,1\}$ and either $a=1$ or $b=1$.
We identify ${\rm Gal}(L|K)$ with $S_n$ via the map $\pi$ given in \eqref{EMGG}.
Since ${\rm Gal}(L|K_1)$ is the stabilizer of $y$, it is isomorphic to $S_{n-1}$.
Therefore, the fixed field $L_1=K_1(\sqrt{{\rm discr}(f_1)})$ of the alternating group $A_{n-1}$ is the unique quadratic extension of $K_1$ inside $L$
(cf.~\cite[Corollary 4.2]{Milne}).
So, since $x^ay^b\notin K_1^{\times2}$ by assumption, we conclude that $K_1(\sqrt{x^ay^b})=L_1$,
or, in other words, 
$$
 x^ay^b{\rm discr}(f_1)\in K_1^{\times2}.
$$ 
Taking the norm ${\rm N}={\rm N}_{K_1|K}$ in the extension $K_1|K$, we get that ${\rm N}(x^ay^b{\rm discr}(f_1))\in K^{\times2}$.
Observe that ${\rm N}(x)=x^n$, ${\rm N}(y)=y_1\cdots y_n=x$, and ${\rm N}({\rm discr}(f_1))={\rm discr}(f)^{n-2}$:
Indeed, if we take as representatives for $S_n/S_{n-1}$ the transpositions $\tau_k=(1 \; k)$ for $k=1,\dots,n$,
then
$$
 {\rm N}({\rm discr}(f_1))=\prod_{k=1}^n {\rm discr}(f_1)^{\tau_k}=
\prod_{k=1}^n\prod_{2\leq i<j\leq n}(y_{\tau_k(i)}-y_{\tau_k(j)})^2,
$$
and each factor $(y_i-y_j)^2$ with $1\leq i<j\leq n$ occurs $n-2$ times, namely once for each $k\notin\{i,j\}$.
Together, we conclude that 
$$
 {\rm N}(x^ay^b{\rm discr}(f_1))=x^{an+b}{\rm discr}(f)^{n-2}\in K^{\times2}.
$$
If $n-2$ is odd, then this immediately contradicts the assumption that $x$ and ${\rm discr}(f)$ are square-independent in $K$.
Similarly, if $an+b$ is odd.
If both $n-2$ and $an+b$ are even, then $b=0$ and thus $a=1$, so $x\in L^{\times2}$,
hence $K(\sqrt{x})$ is the unique quadratic extension of $K$ inside $L$, namely the fixed field
$K(\sqrt{{\rm discr}(f)})$ of $A_n$, contradicting again the assumption that $x$ and ${\rm discr}(f)$ are square-independent in $K$.
\end{proof}

\begin{lemma}\label{lemma:generalLemma}
Let $\tilde{f}(T)\in K[T]$ be a separable polynomial and let $f(T)=\tilde{f}(T)+A\in K(A)[T]$ where $A$ is transcendental over $K(T)$. 
Then ${\rm discr}(f)\in K[A]$ is not divisible by $A$.
\end{lemma}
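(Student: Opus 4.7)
The plan is to reduce the claim to a direct specialization argument. Recall that for a polynomial $g(T) = c_n T^n + \cdots + c_0$ with $c_n\neq 0$ over any commutative ring, the discriminant $\mathrm{discr}(g)$ is given by a universal polynomial expression in the coefficients $c_0,\dots,c_n$ (up to the factor $1/c_n$ coming from the resultant formula $\mathrm{discr}(g) = \frac{(-1)^{n(n-1)/2}}{c_n}\mathrm{Res}(g,g')$). Hence, provided the leading coefficient stays nonzero, taking the discriminant commutes with ring homomorphisms applied to the coefficients.

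First I would observe that $f(T) = \tilde f(T) + A$ and $\tilde f(T)$ have the same degree $n$ and the same leading coefficient $a_n\in K^\times$, because $A$ only enters the constant term. In particular, viewing $f$ as a polynomial in $T$ with coefficients in $K[A]$, every coefficient of $f$ lies in $K[A]$, so $\mathrm{discr}(f)\in K[A]$ as claimed in the statement.

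Next, I would apply the evaluation homomorphism $K[A]\to K$, $A\mapsto 0$, to the coefficients of $f$. Since the leading coefficient $a_n$ of $f$ is already in $K$ and is nonzero, this specialization is compatible with the discriminant formula, and it sends $f$ to $\tilde f$. Therefore
\begin{equation*}
\mathrm{discr}(f)\big|_{A=0} \;=\; \mathrm{discr}(\tilde f).
\end{equation*}

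Finally, since $\tilde f$ is separable by hypothesis, $\mathrm{discr}(\tilde f)\neq 0$ in $K$. Thus $\mathrm{discr}(f)$, regarded as an element of $K[A]$, has nonzero constant term, i.e.\ is not divisible by $A$. There is no real obstacle here beyond checking that the discriminant is given by a polynomial in the coefficients which is compatible with specialization, a standard fact that follows from either the Sylvester matrix presentation of $\mathrm{Res}(f,f')$ or from the product formula $\mathrm{discr}(f)=a_n^{2n-2}\prod_{i<j}(\alpha_i-\alpha_j)^2$ interpreted via symmetric functions of the roots.
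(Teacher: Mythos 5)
Your proposal is correct and follows essentially the same route as the paper: both view $\mathrm{discr}(f)$ as a polynomial in $A$, specialize $A\mapsto 0$ (using that the discriminant is a polynomial expression in the coefficients, compatible with evaluation since the leading coefficient lies in $K^\times$), and conclude from the separability of $\tilde f$ that the constant term $\mathrm{discr}(\tilde f)$ is nonzero. Your extra remarks on the Sylvester/resultant presentation merely make explicit the compatibility step that the paper takes for granted.
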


\begin{proof}
Consider $g(A)={\rm discr}(f)\in K[A]$.
Since ${\rm discr}(f)$ is a polynomial in the coefficients of $f$, we have $g(a)={\rm discr}(\tilde{f}+a)$ for every $a\in K$.
In particular, $g(0)={\rm discr}(\tilde{f})\neq 0$ since $\tilde{f}$ is separable, so $A$ does not divide $g$.
\end{proof}

\begin{proposition}\label{prop:genericGal}
Let $F$ be a field of characteristic different from $2$, 
let $n>m\geq 2$ be integers and let $f_0\in F[T]$ be a monic polynomial of degree $n$.
Define $K=F(A_0,\dots,A_m)$, where $A_0,\dots,A_m$ are independent variables.
Then the polynomial
$$
 f(T) \;=\; f_0(T) + \sum_{i=0}^m A_iT^i \;\in K[T]
$$
satisfies
$$
 {\rm Gal}(f(T)|K)\cong S_n \quad\mbox{ and }
\quad{\rm Gal}(f(-T^2)|K)\cong C_2\wr S_n.
$$ 
\end{proposition}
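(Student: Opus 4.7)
The plan is to assemble Lemmas~\ref{lem:invariantsubspace}, \ref{lemma:sqindepwreathproduct}, \ref{lem:squareindependent}, and \ref{lemma:generalLemma}. Specifically, I would first establish $\Gal(f(T)|K) \cong S_n$; then verify the two square-independence conditions required by Lemma~\ref{lem:squareindependent}, at $K$ and at $K(y_1)$; Lemma~\ref{lem:squareindependent} then yields that $f(0)$ and $y_1$ are square-independent in the splitting field $L$; and finally Lemma~\ref{lemma:sqindepwreathproduct}, whose invariant-subspace hypothesis for $S_n$ is the content of Lemma~\ref{lem:invariantsubspace}, concludes $\Gal(f(-T^2)|K) \cong C_2 \wr S_n$.

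To show $G := \Gal(f(T)|K) \cong S_n$ I proceed in three stages. \emph{Transitivity}: viewed as a polynomial in $A_0$ over the UFD $F[A_1,\ldots,A_m,T]$, $f$ is monic of degree $1$, hence irreducible there, and by Gauss's lemma also in $K[T]$. \emph{$2$-transitivity}: writing $y_1$ for a root of $f$ and solving $f(y_1)=0$ for $A_0$ gives $K(y_1) = F(y_1,A_1,\ldots,A_m)$, and a direct computation yields
\[
\frac{f(T)}{T-y_1} \;=\; \frac{f_0(T)-f_0(y_1)}{T-y_1} + \sum_{i=1}^m A_i\bigl(T^{i-1}+T^{i-2}y_1+\cdots+y_1^{i-1}\bigr),
\]
which is monic of degree $1$ in $A_1$ over $F[y_1,A_2,\ldots,A_m,T]$, hence irreducible over $K(y_1)$ by Gauss's lemma again. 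Thus $G$ is $2$-transitive, and in particular primitive. \emph{Transposition in $G$}: here the hypothesis $m \geq 2$ becomes essential. Fixing $\beta \in \overline F$, the pair of equations $f(\beta)=0$ and $f'(\beta)=0$ is linear in $(A_0,\ldots,A_m)$ and cuts out an affine subspace of dimension $\geq m-1 \geq 1$ inside $\AA^{m+1}$. Its generic point lies in the discriminant hypersurface $\{\Delta=0\}$ at a configuration where $\beta$ is an honest double root of $f$ and the remaining $n-2$ roots are distinct. At the corresponding height-$1$ prime $\mathfrak p$ of $R := F[A_0,\ldots,A_m]$, Hensel's lemma factors $f$ over the completion $\widehat K_\mathfrak p$ into $n-2$ linear factors and one quadratic factor whose discriminant has odd valuation; hence the inertia in $G$ at any prime of $L$ above $\mathfrak p$ is generated by the transposition exchanging the two roots approaching $\beta$. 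A primitive subgroup of $S_n$ containing a transposition equals $S_n$, so $G \cong S_n$.

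Next, the square-independence conditions. In $K$: $f(0) = c_0 + A_0$ is prime in $R$, so $v_{(c_0+A_0)}(f(0))=1$ is odd and $f(0) \notin K^{\times 2}$; since $G = S_n \not\subseteq A_n$, $\Discr(f) \notin K^{\times 2}$; and Lemma~\ref{lemma:generalLemma}, applied with $A = c_0+A_0$ and $\tilde f = f - f(0) = T \cdot g(T)$ (separable, which reduces to separability of $g$, checked by the same linearity-in-$A_1$ argument), gives $v_{(c_0+A_0)}(\Discr(f)) = 0$, so by parity $f(0)\cdot\Discr(f) \notin K^{\times 2}$. In $K(y_1) = F(y_1,A_1,\ldots,A_m)$: substituting for $A_0$ yields $f(0) = -y_1 \cdot g(y_1)$, where $g(y_1) = A_1 + h(y_1,A_2,\ldots,A_m)$ is monic of degree $1$ in $A_1$ and hence generates a height-$1$ prime distinct from $(y_1)$. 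Elementary valuation counts give $v_{(y_1)}(y_1) = v_{(y_1)}(f(0)) = 1$ and $v_{(g(y_1))}(y_1\cdot f(0)) = 1$, all odd, giving the required square-independence.

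Finally, Lemma~\ref{lem:squareindependent} delivers $f(0)$ and $y_1$ square-independent in $L$, and Lemma~\ref{lemma:sqindepwreathproduct} concludes $\Gal(f(-T^2)|K) \cong C_2 \wr S_n$. The main obstacle is the transposition step above: producing a transposition in $G$ requires both the hypothesis $m \geq 2$ (to get a positive-dimensional family of double-root configurations) and a careful generic-point analysis of the discriminant hypersurface, and this is where the bulk of the technical work of the proof lies.
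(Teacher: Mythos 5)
Your overall architecture for the wreath-product half is exactly the paper's: verify the two square-independence hypotheses of Lemma~\ref{lem:squareindependent} (in $K$ via Lemma~\ref{lemma:generalLemma} applied to $f-f(0)=Tg$, and in $K(y_1)$ via the rationality of $K(y_1)$), deduce square-independence of $f(0)$ and $y_1$ in $L$, and conclude with Lemmas~\ref{lem:invariantsubspace} and \ref{lemma:sqindepwreathproduct}. Your Claim-2 variant (eliminating $A_0$ and using odd valuations at the primes $(y_1)$ and $(g(y_1))$ of $F[y_1,A_1,\dots,A_m]$) is a harmless reshuffling of the paper's elimination of $A_1$. One small point you gloss over: separability of $g$ does not follow from irreducibility (``linearity in $A_1$'') alone in positive characteristic; you need $g'\neq 0$, which the paper gets from $g'(0)=A_2\neq0$ --- this is one of the places where $m\geq 2$ enters, so it should be said explicitly.

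The genuine gap is in the step ${\rm Gal}(f(T)|K)\cong S_n$. The paper does not prove this at all: it quotes \cite[Proposition~3.6]{PrimePoly2014}. You instead sketch a transitivity/$2$-transitivity/transposition argument, and the transposition step does not work as written. The locus $\{f(\beta)=f'(\beta)=0\}$ is cut out by two independent linear conditions on $(A_0,\dots,A_m)$, so its generic point is a prime of height $2$ in $F[A_0,\dots,A_m]$, not height $1$; the Hensel/inertia mechanism you invoke (completion at $\mathfrak p$ is a complete DVR, inertia cyclic generated by a transposition when the quadratic factor's discriminant has odd valuation) requires a divisorial valuation, i.e.\ a component of the discriminant hypersurface, and transferring your set-up to such a component changes the statement: along that component the location of the double root varies, and the assertions you need --- that at its generic point $f$ has exactly one double root, that the remaining $n-2$ roots are simple, and that the degeneration is ``honest'' (odd valuation) --- are precisely the content of the cited proposition and are not automatic. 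They are genuinely delicate in characteristic $p$: the paper's own Section~\ref{sec:smallEpsilon} shows that for $f_0=T^{p^2}$ with only $A_0,A_1$ free the Galois group is ${\rm Aff}(\FF_{p^2})$, not $S_{p^2}$, so the genericity you assert cannot be waved through and must use $m\geq2$ (and ${\rm char}\,F\neq2$) in an essential, verified way. Also, for $\beta\in\overline F\setminus F$ your prime does not even live in $F[A_0,\dots,A_m]$; you would have to base-change to $\overline F$ and note that a transposition in ${\rm Gal}(f|\overline F(\mathbf A))$ is still one in $G$. As it stands, your proposal either needs to carry out this analysis (one double root generically, cofactor separable, odd ramification) or simply cite \cite[Proposition~3.6]{PrimePoly2014} as the paper does.
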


\begin{proof}
Write $f_0=T^n+\sum_{i=0}^{n-1} a_iT^i$.
By replacing $A_i$ by $A_i-a_i$, 
we may assume without loss of generality that $a_i=0$ for $0\leq i\leq m$.
In particular, $f(0)=A_0$.
Applying \cite[Proposition 3.6]{PrimePoly2014} with $k=n$ and $g=1$ gives that ${\rm Gal}(f(T)|K)\cong S_n$.
In particular, ${\rm discr}(f)\notin K^{\times2}$.  
Write $f(T)=\prod_{i=1}^n(T+y_i)$.
We will now verify the assumptions of Lemma~\ref{lem:squareindependent}.\\

\noindent
{\em Claim 1: $f(0)$ and ${\rm discr}(f)$ are square-independent in $K$}
\nopagebreak

Let $\tilde{f}(T)=f(T)-A_0\in K_0[T]$, where $K_0=F(A_1,\dots,A_m)$, and 
$$
 g(T)=T^{-1}\cdot\tilde{f}(T)=T^{n-1}+a_{n-1}T^{n-2}+\cdots +A_2T+A_1\in K_0[T].
$$ 
Since $g$ is monic and linear in $A_1$, it is irreducible in $K_0[T]$ by Gauss' lemma.
Therefore, since $g(0)=A_1\neq 0$ and $g'(0)=A_2\neq 0$, both $g(T)$ and $\tilde{f}(T)=Tg(T)$ are separable.
Thus, by Lemma \ref{lemma:generalLemma}, ${\rm discr}(f)\in K_0[A_0]$ is not divisible by $A_0$.
In particular, 
$$
 A_0\cdot{\rm discr}(f)\notin K^{\times2}.
$$ 
Together with ${\rm discr}(f)\notin K^{\times2}$ and the obvious fact that $A_0\notin K^{\times2}$,
we conclude that $A_0$ and ${\rm discr}(f)$ are square-independent in $K$.\\

\noindent
{\em Claim 2: $f(0)$ and $y_1$ are square-independent in $K(y_1)$}
\nopagebreak

>From $f(-y_1)=0$ we see that
$$
 A_1= -y_1^{-1}\cdot\left((-y_1)^n+\sum_{i=m+1}^{n-1} a_i(-y_1)^i + \sum_{i=2}^m A_i(-y_1)^i + A_0\right)\in K_1(y_1),
$$
where $K_1=F(A_0,A_2,\dots,A_m)$.
Thus, $K(y_1)=K_1(y_1)=F(A_0,A_2,\dots,A_m,y_1)$, which, since ${\rm tr.deg}(K(y_1)|F)=m+1$, implies that
$A_0$, $A_2,\dots,A_m$ and $y_1$ are algebraically independent over $F$
(in other words, the $(m+1)$-dimensional hypersurface defined by $f=0$ is rational). 
In particular, $A_0$ and $y_1$ are square-independent in $K(y_1)$.\\

\noindent
{\em Conclusion of the proof}:
\nopagebreak

Using Claim 1 and Claim 2, we can now apply Lemma \ref{lem:squareindependent} 
and conclude that $f(0)$ and $y_1$ are square-independent in 
the splitting field of $f(T)$.
Therefore, since $S_n$ has no invariant subspaces other than the ones of Lemma \ref{lem:invariantsubspace}, we may invoke Lemma \ref{lemma:sqindepwreathproduct} and get that ${\rm Gal}(f(-T^2)|K)\cong C_2\wr S_n$.
\end{proof}

\section{Proof of Theorem \ref{maintheorem}}
The proof of Theorem \ref{maintheorem} follows the pattern of similar proofs in the literature, like in \cite{ABR,BB,PrimePoly2014,Entin}.
The main ingredient is an explicit Chebotarev theorem, which we recall now.

Fix $r,d\in\mathbb{N}$ and let $q$ be a prime power. 
We let $\mathbf{A}=(A_1,\dots,A_d)$ be a $d$-tuple of variables
and define $R=\mathbb{F}_q[\mathbf{A}]$ and $K=\mathbb{F}_q(\mathbf{A})$.
For a monic separable polynomial $g\in R[T]$ of degree $r$,
we write 
$$
 g(T)=\prod_{i=1}^r(T-\rho_i)
$$ 
and let $M=K(\rho_1,\dots,\rho_r)$ be a splitting field of $g$.
\emph{We assume that $M$ is regular over $\FF_q$, i.e.~$M\cap \overline{\FF}_{q}=\FF_q$, where $\overline{\FF}_q$ is an algebraic closure of $\FF_q$.}
The action of ${\rm Gal}(M|K)$ on $\{\rho_1,\dots,\rho_r\}$ induces an embedding 
$$
 \iota\colon {\rm Gal}(M|K)\rightarrow S_r. 
$$ 

For each $\mathbf{a}=(a_1,\dots,a_d)\in\mathbb{F}_q^d$ 
we have the homomorphism $\Phi_\mathbf{a}:R\rightarrow\mathbb{F}_q$ given by $\Phi_\mathbf{a}(A_i)=a_i$ for all $i$.
For those $\mathbf{a}\in\mathbb{F}_q^d$ which are not a zero of $\Delta:={\rm discr}(g)\in R$,
we can choose an extension of $\Phi_\mathbf{a}$ to a homomorphism
\begin{equation}\label{Phiprime}
\Phi_\mathbf{a}' \colon R[\Delta^{-1},\bfrho]\to \overline{\FF}_q.
\end{equation}
We apply $\Phi_\mathbf{a}$ to polynomials by applying it to their coefficients.
Then 
$$
 g_\mathbf{a}:=\Phi_\mathbf{a}(g)=\prod_{i=1}^r(T-\Phi_\mathbf{a}'(\rho_i))\in\mathbb{F}_q[T],
$$ 
so if $M_\mathbf{a}$ denotes the splitting field of $g_\mathbf{a}$ over $\mathbb{F}_q$,
then the action of 
${\rm Gal}(M_\mathbf{a}|\mathbb{F}_q)$ on the set
$\{\Phi_\mathbf{a}'(\rho_1),\dots,\Phi_\mathbf{a}'(\rho_r)\}$ 
of roots of $g_\mathbf{a}$ 
(which has again $r$ elements since $\Delta(\mathbf{a})\neq0$)
induces an embedding
$$
 \iota_\mathbf{a}:{\rm Gal}(M_\mathbf{a}|\mathbb{F}_q)\rightarrow S_r.
$$ 
As before we denote by $\phi_q\in{\rm Gal}(M_\mathbf{a}|\mathbb{F}_q)$ the $q$-Frobenius.

\begin{theorem}\label{cor:cheb}
There exists a constant $c$ depending only on $d$ and the total degree of $g$ (as a polynomial in $A_1,\dots,A_d,T$)
such that for every $X\subseteq{\rm Gal}(M|K)$ invariant under conjugation,
\[
\left|\# \{ \mathbf{a}\in \FF_q^d : \Delta(\mathbf{a})\neq 0 \mbox{ and }\iota_\mathbf{a}(\phi_q)\in\iota(X) \} -\frac{\#X}{\#{\rm Gal}(M|K)}\cdot
q^d\right|\leq c q^{d-1/2}.
\]
\end{theorem}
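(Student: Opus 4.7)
This is the standard effective Chebotarev theorem in function fields, which I would prove by passing to algebraic geometry and invoking the Lang--Weil estimate on a family of twisted \'etale covers. Geometrically, $U := {\rm Spec}\, R[\Delta^{-1}]$ is an open subscheme of $\mathbb{A}^d_{\FF_q}$ and $M$ is the function field of a geometrically irreducible affine $\FF_q$-variety $\tilde U$, where geometric irreducibility is guaranteed by the hypothesis $M\cap\overline{\FF}_q=\FF_q$. The inclusion $R[\Delta^{-1}] \subseteq R[\Delta^{-1},\bfrho]$ promotes to a finite \'etale Galois cover $\tilde U\to U$ with group $G={\rm Gal}(M|K)$. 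For $\mathbf{a}\in U(\FF_q)$, any lift $\tilde{\mathbf{a}}\in\tilde U(\overline{\FF}_q)$ gives an element $\sigma\in G$ via $\phi_q(\tilde{\mathbf{a}})=\sigma.\tilde{\mathbf{a}}$; changing the lift replaces $\sigma$ by a conjugate, so the conjugacy class ${\rm Frob}_\mathbf{a}\subseteq G$ is well defined, and an unwinding of definitions identifies $\iota({\rm Frob}_\mathbf{a})$ with the $S_r$-conjugacy class of $\iota_\mathbf{a}(\phi_q)$.

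For each $\sigma\in G$ I would then introduce the twisted variety $\tilde U_\sigma$, whose $\FF_q$-points are
$$
\tilde U_\sigma(\FF_q) = \{\tilde{\mathbf{a}}\in\tilde U(\overline{\FF}_q) : \phi_q(\tilde{\mathbf{a}}) = \sigma.\tilde{\mathbf{a}}\}.
$$
This is again a geometrically irreducible $\FF_q$-variety of dimension $d$, with geometric degree and degrees of defining equations bounded only in terms of $d$ and the total degree of $g$. Projecting to $U$ exhibits $\tilde U_\sigma(\FF_q)$ as a $|Z_G(\sigma)|$-to-$1$ cover of $\{\mathbf{a}\in U(\FF_q) : \sigma\in{\rm Frob}_\mathbf{a}\}$, so grouping the conjugacy classes $C\subseteq X$ (recall $X$ is conjugation-invariant) and picking a representative $\sigma_C\in C$ gives
$$
\#\{\mathbf{a}\in U(\FF_q) : \iota_\mathbf{a}(\phi_q)\in\iota(X)\} \;=\; \sum_{C\subseteq X}\frac{|C|}{|G|}\cdot\#\tilde U_{\sigma_C}(\FF_q),
$$
where the sum runs over conjugacy classes of $G$ contained in $X$.

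Finally I would apply the Lang--Weil inequality to each $\tilde U_{\sigma_C}$ (in a form that is effective in the degree of the defining equations, e.g.\ Cafure--Matera), obtaining $|\#\tilde U_{\sigma_C}(\FF_q) - q^d|\leq c'\,q^{d-1/2}$ with $c'$ depending only on $d$ and the total degree of $g$; substituting into the identity above and using $\sum_{C\subseteq X}|C|=\#X$ yields the claimed bound, after noting that the $O(q^{d-1})$ contribution from removing the hypersurface $\{\Delta=0\}$ is absorbed in the error term. The main obstacle is ensuring that $c'$ is uniform in $\sigma$ and, more subtly, depends only on $d$ and $\deg g$ rather than on the group $G$ (whose order is a priori as large as $r!$): this is handled by tracking, in the resolvent construction of the Galois closure, explicit equations of bounded degree for each $\tilde U_\sigma$, or equivalently by quoting the off-the-shelf versions of explicit Chebotarev appearing in \cite{PrimePoly2014,Entin} on which the analogous theorems in the cited literature are based.
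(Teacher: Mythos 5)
Your sketch is correct, but it reconstructs from scratch what the paper simply quotes: the paper's entire proof of this theorem is the remark that it ``can be deduced immediately from \cite[Theorem~A.4]{ABR}'', i.e.\ from exactly the kind of off-the-shelf explicit Chebotarev statement that you mention only as a fallback in your final sentence. The route you describe --- the finite \'etale Galois cover $\tilde U\to U={\rm Spec}\,R[\Delta^{-1}]$ with group $G={\rm Gal}(M|K)$, geometric irreducibility coming from the regularity hypothesis $M\cap\overline{\FF}_q=\FF_q$, the twists $\tilde U_\sigma$, the identity $\#\tilde U_{\sigma}(\FF_q)=|Z_G(\sigma)|\cdot\#\{\mathbf{a}:\sigma\in{\rm Frob}_{\mathbf a}\}$ summed over conjugacy classes contained in $X$, and a Lang--Weil bound that is uniform in the degrees --- is precisely the argument underlying the cited result, so in substance the two proofs coincide; yours just takes on the burden of the standard but nontrivial verifications (that $R[\Delta^{-1},\bfrho]$ is \'etale Galois over $R[\Delta^{-1}]$ with group ${\rm Gal}(M|K)$, that the twists are well defined over $\FF_q$ and geometrically irreducible, that membership $\iota_{\mathbf a}(\phi_q)\in\iota(X)$ is independent of the choice of $\Phi_{\mathbf a}'$ and equivalent to ${\rm Frob}_{\mathbf a}\subseteq X$, and that the degrees of the $\tilde U_\sigma$ --- hence the Lang--Weil constant --- are bounded in terms of $d$ and the total degree of $g$ alone, which is harmless since $\#G\leq r!$ is itself bounded by that degree data). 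All of this bookkeeping is what the appendix of \cite{ABR} supplies; citing it buys the uniformity for free, while your version buys self-containedness at the cost of spelling those steps out.
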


\begin{proof}
This is classical. In this form of uniformity it can be deduced immediately from 
\cite[Theorem~A.4]{ABR}.
\end{proof}

\begin{proof}[Proof of Theorem \ref{maintheorem}]
Let $q$ be an odd prime power, $n>2$, $1>\epsilon \geq \frac{2}{n}$, $f_0\in \M_{n,q}$, and put $m=\lfloor \epsilon n\rfloor\geq 2$. 
We let $\mathbf{A}=(A_0,\dots,A_m)$ be a tuple of independent variables and define $K=\mathbb{F}_q(\mathbf{A})$.
Let
$$
 f(T) \;=\; f_0(T) + \sum_{i=0}^m A_iT^i \;\in K[T]
$$
and
$$
 g(T) = (-1)^{n}\cdot f(-T^2) \in K[T].
$$
Now let $L$ be the splitting field of $f$ over $K$, write $f=\prod_{i=1}^n(T-\omega_i)$
and let $\Omega=\{\omega_1,\dots,\omega_n\}\subseteq L$.
For each $i=1,\dots,n$ choose a square root $\rho_i=\sqrt{-\omega_i}$
and let $\rho_{n+i}=-\rho_i$.
Then 
$$
 g(T)=\prod_{i=1}^{2n}(T-\rho_i)
$$ 
and $M=K(\bfrho)$ is the splitting field of $g$. 
Let $\Theta\colon {\rm Gal}(M|K)\rightarrow C_2\wr S_n$ be the homomorphism given in \eqref{WreathGalois}.
By Proposition \ref{prop:genericGal}, 
${\rm Gal}(L|K)\cong S_n$ and 
$\Theta$ is an isomorphism.
As Proposition~\ref{prop:genericGal} also applies to $F=\overline{\mathbb{F}}_q$ instead of $F=\mathbb{F}_q$,
we get that ${\rm Gal}(M\overline{\mathbb{F}}_q|K\overline{\mathbb{F}}_q)={\rm Gal}(M|K)$,
and therefore $M|\mathbb{F}_q$ is regular.

The discriminant $\Delta:={\rm discr}(g)$ is a non-zero polynomial in $\textbf{A}$ of degree $\leq 4n$ (by the resultant formula). 
Therefore,
\begin{equation}\label{eq:zerosofDelta}
\#\{\mathbf{a}\in \mathbb{F}_q^{m+1}: \Delta(\mathbf{a})=0\} \leq 4n q^{m},
\end{equation}
see e.g.~\cite[Ch.~4 Lemma 3A]{Schmidt}.

For $\mathbf{a}\in\mathbb{F}_q^{m+1}$ which is not a zero of $\Delta$
we choose a homomorphism $\Phi_\mathbf{a}'$ as in \eqref{Phiprime}
and let $f_\mathbf{a}:=\Phi_\mathbf{a}(f),g_\mathbf{a}:=\Phi_\mathbf{a}(g)\in\mathbb{F}_q[T]$.
Note that 
$$
 f_\mathbf{a}(T)=\prod_{i=1}^n(T-\Phi_\mathbf{a}'(\omega_i))
$$ 
and 
$$
 g_\mathbf{a}(T)=(-1)^n\cdot f_\mathbf{a}(-T^2)=\prod_{i=1}^{2n}(T-\Phi_\mathbf{a}'(\rho_i)),
$$ 
so $\Omega_\mathbf{a}:=\{\Phi_\mathbf{a}'(\omega_1),\dots,\Phi_\mathbf{a}'(\omega_n)\}$ is the set of zeros of $f_\mathbf{a}$,
$L_\mathbf{a}=\mathbb{F}_q(\Phi_\mathbf{a}'(\bfomega))$ is a splitting field of $f_\mathbf{a}(T)$,
and $M_\mathbf{a}=\mathbb{F}_q(\Phi_\mathbf{a}'(\bfrho))$ is a splitting field of $f_\mathbf{a}(-T^2)$.
Let 
$$
 \Theta_\mathbf{a}:{\rm Gal}(M_\mathbf{a}|\mathbb{F}_q)\rightarrow C_2\wr S_n
$$
be as in \eqref{WreathGalois}
and $\iota_\mathbf{a}:{\rm Gal}(M_\mathbf{a}|\mathbb{F}_q)\rightarrow S_{2n}$ as above.
By Lemma \ref{lem:Theta}, the following diagram commutes:
\begin{equation}\label{diagram}
 \xymatrix{
 {\rm Gal}(M|K)\ar@{->}[r]^\Theta\ar@{->}[rd]_\iota & C_2\wr S_n\ar@{->}[d]  & {\rm Gal}(M_\mathbf{a}|\mathbb{F}_q)\ar@{->}[l]_{\Theta_{\mathbf{a}}}\ar@{->}[ld]^{\iota_\mathbf{a}} \\
  &  S_{2n} &  
 }
\end{equation}

Now let $X_n\subseteq C_2\wr S_n$ be as in \eqref{Xn} and define $X:=\Theta^{-1}(X_n)\subseteq{\rm Gal}(M|K)$.
By Proposition~\ref{prop:representable}, $b_q(f_\mathbf{a})=1$ if and only if $\Theta_\mathbf{a}(\phi_q)\in X_n$.
The commutativity of \eqref{diagram} shows that the latter is equivalent to $\iota_\mathbf{a}(\phi_q)\in\iota(X)$.

Therefore, Theorem \ref{cor:cheb} applied to $g$ with $r=2n$ and $d=m+1$, together with \eqref{eq:zerosofDelta}, 
gives a constant $c_n$ depending
only on $m$, $n$ and the total degree of $g$ such that
\begin{equation}\label{eqMF}
\left|\# \left\{ \mathbf{a}\in \FF_q^{m+1} : b_q(f_\mathbf{a})=1 \right\} -\frac{\#X}{\#{\rm Gal}(M|K)}\cdot q^{m+1}\right|\leq c_n q^{m+1/2}.
\end{equation}
Since $m\leq n$ and the total degree of $g$, which equals $2n$, are independent of $q$ and the choice of the polynomial $f_0$ of degree $n$, 
the constant $c_n$ can be chosen to depend only on $n$.
Plugging \eqref{lem:Xn} into \eqref{eqMF} concludes the proof. 
\end{proof}
\section{Small $\epsilon$}\label{sec:smallEpsilon}
In this section we deal with $0<\epsilon<\frac{2}{n}$. These $\epsilon$'s are not covered by Theorem~\ref{maintheorem}. 
We construct sequences of $f_0=f_{0,q_{i}}\in\M_{n,q_i}$ of a fixed arbitrarily large  degree $n$ such that $\left<b_{q_i}( f) \right>_{\|f-f_0\|\leq\|f_0\|^{\epsilon}}$ asymptotically differs from  \eqref{eqnmaintheorem} as $q_i\to \infty$. This  shows that the restriction on $\epsilon$ in Theorem~\ref{maintheorem} is not redundant.

\subsection{First interval: $0<\epsilon< \frac{1}{n}$}
Let $q$ be an odd prime power. We fix $k\geq 1$ and let $n=2k+1$ and $f_0=T^{2k+1}$. 
Then 
\[
\{f\in\mathbb{F}_q[T]:\|f-f_0\|\leq\|f_0\|^\epsilon\} =\{T^{2k+1}+a : a\in \FF_q\}.
\] 
We note that $b_q(T^{2k+1}+a)=1$ if and only if $a$ is a square in $\FF_q$. 
Indeed, if $b_q(T^{2k+1}+a)=1$, then $T^{2k+1}+a = A^2+TB^2$, so $a=A(0)^2$ is a square and  if $a=b^2$ with $b\in\mathbb{F}_q$, then $T^{2k+1}=b^2+T(T^k)^2$,  so $b_q(T^{2k+1}+a)=1$.

There are exactly $\frac{q+1}{2}$ squares in $\FF_q$,  thus 
\[
\left<b_q( f) \right>_{\|f-f_0\|\leq\|f_0\|^{\epsilon}}=
\frac{(q+1)/2}{q}=
\frac{1}{2}+\frac{1}{2q},
\]
which is obviously not compatible with \eqref{eqnmaintheorem}.

\subsection{Second interval: $\frac{1}{n}\leq \epsilon <\frac{2}{n}$}
Fix a prime $p>2$, let $n=p^2$, $\nu\in\mathbb{N}$, $q=p^{2\nu}$, and $f_0=T^{p^2}\in\mathbb{F}_q[T]$. We compute the asymptotic mean value of $b_q(f)$ for $f$ in 
\[
\{f\in\mathbb{F}_q[T]:\|f-f_0\|\leq \|f_0\|^{\epsilon} \}=\{T^{p^2}+a_1T+a_0: a_1,a_0\in \FF_q\}
\]
as $\nu\to \infty$ (and hence also $q=p^{2\nu}\to\infty$).
\begin{theorem}\label{thm:2parameter}
Let
\begin{equation}\label{eq:Cp}
c_p= 
\frac{1}{2^{p^2}p^2(p^2-1)} + \frac{1}{2^{p}p^2}+ \frac{1}{2^{p^2}(p^2-1)}\cdot\sum_{1\neq d\mid p^2-1}2^{(p^2-1)(d-1)/d}\phi(d) ,
\end{equation}
where $\phi(d)$ is the Euler totient function. Then
\begin{equation}\label{eq:SI-AS}
\left<b_q(f)\right>_{\|f-f_0\|\leq \|f_0\|^{\epsilon}}  \sim c_p , \qquad \nu\to \infty.
\end{equation}
\end{theorem}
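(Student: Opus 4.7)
The plan is to mirror the approach of Section~\ref{calculation} and the proof of Theorem~\ref{maintheorem}: take the generic polynomial
$$
 f(T) \;=\; T^{p^2} + A_1 T + A_0 \;\in\; K[T],\qquad K=\mathbb{F}_q(A_0,A_1),
$$
determine the Galois group of $f(-T^2)$ over $K$, and apply Theorem~\ref{cor:cheb} (with $d=2$, $r=2p^2$) together with Proposition~\ref{prop:representable}. Specializations with $a_0a_1=0$ form only an $O(q^{-1})$-fraction of $\mathbb{F}_q^2$ and thus contribute negligibly; outside this locus $\mathrm{discr}(f)=A_1^{p^2}\neq 0$, so both $f$ and $f(-T^2)$ are separable.

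The new feature which changes the answer from Theorem~\ref{maintheorem} is that the generic Galois group is no longer symmetric. Writing $f=g_0+A_0$ with $g_0(T)=T^{p^2}+A_1T$ an $\mathbb{F}_p$-linear polynomial, its roots form the $\mathbb{F}_{p^2}$-line $\mathbb{F}_{p^2}\cdot\beta$, where $\beta^{p^2-1}=-A_1$, so the roots of $f$ are $\omega_c=\alpha_0+c\beta$ for $c\in\mathbb{F}_{p^2}$ (with $\alpha_0$ any fixed root of $f$). This embeds $G_f:=\mathrm{Gal}(f|K)$ into the affine group $\mathrm{AGL}_1(\mathbb{F}_{p^2})=\mathbb{F}_{p^2}\rtimes\mathbb{F}_{p^2}^{\times}$ acting by $c\mapsto\zeta c+t$. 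I will verify equality by computing the tower $K\subseteq K(\beta)\subseteq M=K(\beta,\alpha_0)$: the minimal polynomial $T^{p^2-1}+A_1$ of $\beta$ is linear in $A_1$, hence irreducible by Gauss, giving $[K(\beta):K]=p^2-1$; and after the substitution $T=\beta S$ the equation for $\alpha_0$ becomes $\beta^{p^2}S^{p^2}-\beta^{p^2}S+A_0=0$, which is linear in $A_0$ and thus also irreducible over $K(\beta)$, giving $[M:K(\beta)]=p^2$. Thus $[M:K]=p^2(p^2-1)=|\mathrm{AGL}_1(\mathbb{F}_{p^2})|$, and the identical argument over $\overline{\mathbb{F}}_q$ shows that $M|\mathbb{F}_q$ is regular.

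The main step is to show $\mathrm{Gal}(f(-T^2)|K)\cong C_2\wr G_f$. Lemma~\ref{lemma:wreathproduct} embeds this Galois group into $C_2\wr G_f$, so by Kummer theory it suffices to prove that the roots $\{-\omega_c\}_{c\in\mathbb{F}_{p^2}}$ are $\mathbb{F}_2$-linearly independent in $M^{\times}/(M^{\times})^2$. The key observation is that $M=\mathbb{F}_q(\alpha_0,\beta)$ is purely transcendental of transcendence degree $2$: indeed $A_1=-\beta^{p^2-1}$ and $A_0=-\prod_{c\in\mathbb{F}_{p^2}}(\alpha_0+c\beta)$ lie in $\mathbb{F}_q[\alpha_0,\beta]$, while $\mathrm{tr.deg}(M|\mathbb{F}_q)=\mathrm{tr.deg}(K|\mathbb{F}_q)=2$ forces $\alpha_0,\beta$ to be algebraically independent over $\mathbb{F}_q$. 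Consequently $\mathbb{F}_q[\alpha_0,\beta]$ is a UFD, the linear forms $\alpha_0+c\beta$ ($c\in\mathbb{F}_{p^2}$) are pairwise non-associate prime elements in it, and for any nonempty $S\subseteq\mathbb{F}_{p^2}$ the product $\prod_{c\in S}(-\omega_c)$ has exponent $1$ at each prime $\alpha_0+c\beta$, $c\in S$, hence is not a square in $M$.

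With the Galois group pinned down, Theorem~\ref{cor:cheb} combined with Proposition~\ref{prop:representable} and the commuting diagram of Lemma~\ref{lem:Theta} yields
$$
 \langle b_q(f)\rangle_{\|f-f_0\|\leq\|f_0\|^{\epsilon}} \;=\; \frac{\#\bigl(X_{p^2}\cap(C_2\wr G_f)\bigr)}{|C_2\wr G_f|} + O(q^{-1/2}).
$$
The orbit-counting argument from the proof of Lemma~\ref{lem:Xn} then shows this density equals $\frac{1}{|G_f|}\sum_{\pi\in G_f}2^{-o(\pi)}$, where $o(\pi)$ denotes the number of $\pi$-orbits on $\mathbb{F}_{p^2}$. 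Classifying the elements of $\mathrm{AGL}_1(\mathbb{F}_{p^2})$ by the order $d$ of their linear part produces three cases: the identity (one element, $o=p^2$); the $p^2-1$ nontrivial translations (each with $o=p$); and, for each divisor $d>1$ of $p^2-1$, the $p^2\phi(d)$ elements whose linear part has order $d$ (each with $o=1+(p^2-1)/d$). Summing these contributions and dividing by $|G_f|=p^2(p^2-1)$ recovers exactly the formula \eqref{eq:Cp} for $c_p$. The most delicate step of the whole argument is the square-independence claim above; once that is in hand, everything else follows the template of the proof of Theorem~\ref{maintheorem}.
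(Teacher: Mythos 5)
Your argument is correct, and while the overall architecture (generic polynomial, explicit Chebotarev via Theorem~\ref{cor:cheb}, counting $X_{p^2}\cap C_2\wr G$) matches the paper, your route to the key input --- the identification $\mathrm{Gal}(f(-T^2)|K)\cong C_2\wr{\rm Aff}(\FF_{p^2})$ --- is genuinely different. The paper cites Uchida's theorem for $\mathrm{Gal}(f|K)\cong{\rm Aff}(\FF_{p^2})$, then proves via Klemm's results on permutation modules that the image in $S_{p^2}$ admits only the four standard invariant subspaces of $\FF_2^{p^2}$ (Lemma~\ref{lem:invsubspaces_aff}), and finally verifies square-independence of $f(0)$ and $y_1$ in the splitting field through a unique-quadratic-subextension argument using ${\rm discr}(f)=\pm A_1^{p^2}$, so that Lemma~\ref{lemma:sqindepwreathproduct} applies. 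You instead exploit the additive structure of $T^{p^2}+A_1T$: the roots are $\alpha_0+c\beta$, $c\in\FF_{p^2}$, with $\beta^{p^2-1}=-A_1$, which pins down $\mathrm{Gal}(f|K)$ by a degree count in the tower $K\subseteq K(\beta)\subseteq K(\beta,\alpha_0)$ (two applications of Gauss's lemma, no Uchida needed, using $\FF_{p^2}\subseteq\FF_q$), and simultaneously shows that the splitting field of $f$ is the rational field $\FF_q(\alpha_0,\beta)$. Unique factorization of the pairwise non-associate linear forms $\alpha_0+c\beta$ then gives square-independence of all $p^2$ elements $-\omega_c$ at once, so Kummer theory plus the order count yields the full wreath product directly, bypassing both Klemm's lemma and the delicate Lemma~\ref{lem:squareindependent}-type manipulation; this is more elementary and self-contained, whereas the paper's route reuses the general machinery of Section~\ref{calculation}, which is what one needs when the splitting field is not visibly rational. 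Your final count --- classifying elements of ${\rm Aff}(\FF_{p^2})$ by the order of the linear part and counting orbits --- is exactly the paper's Lemma~\ref{lem:cycletypeaff} and recovers \eqref{eq:Cp} correctly. One small point to state explicitly: Theorem~\ref{cor:cheb} requires regularity over $\FF_q$ of the splitting field of $g=f(-T^2)$, not merely of $f$; this does follow in your setup because the rationality/UFD argument works verbatim over $\overline{\FF}_q$, giving the same group $C_2\wr{\rm Aff}(\FF_{p^2})$ over $\overline{\FF}_q(A_0,A_1)$, which is the same device the paper uses.
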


Bounding the last summand for $d=p^2-1$ gives that
\[
c_{p} \geq \frac{1}{2^{p^2}(p^2-1)}\cdot 2^{(p^2-1)\cdot(p^2-2)/(p^2-1)}\phi(p^2-1)=\frac{1}{4}\cdot\frac{\phi(p^2-1)}{p^2-1}\gg \frac{1}{\log\log p^2}, \qquad p\to \infty,
\]
as $\phi(n)\gg \frac{n}{\log\log n}$ for $n\rightarrow\infty$.
On the other hand,
\[
\frac{1}{4^{p^{2}}}\binom{2p^{2}}{p^{2}} \sim \frac{1}{\sqrt{\pi}p}, \qquad p\to \infty.
\]
Thus, if we pick $p$ sufficiently large, we see that $c_p > \frac{1}{4^{p^{2}}}\binom{2p^{2}}{p^{2}} $, hence \eqref{eq:SI-AS}  is not compatible with  \eqref{eqnmaintheorem}. 

To prove \eqref{eq:SI-AS}, we take the same approach as the one used to obtain \eqref{eqnmaintheorem}, namely applying the explicit Chebotarev Theorem (Theorem \ref{cor:cheb}); 
however, the respective Galois groups are different, which explains the different asymptotic formula.

Let $F|\mathbb{F}_{p^2}$ be a field extension,
$A_0,A_1$ independent variables,
$K=F(A_0,A_1)$ 
and 
$$
 f(T)=T^{p^2}+A_1T+A_0\in K[T].
$$ 
As ${\rm Aut}(\mathbb{F}_{p^2}|F\cap\mathbb{F}_{p^2})$ is trivial,
\cite[Theorem 2]{Uchida1970} gives that 
\begin{equation}\label{group}
G:=\Gal(f|K)\cong {\rm Aff}(\FF_{p^2}),
\end{equation}
the group of affine linear transformations 
\[
\sigma_{a,b}: x\mapsto ax+b, \qquad a\in \FF_{p^2}^\times,\quad b\in \FF_{p^2}
\]
of the affine line $\AA^1(\FF_{p^2})$
(the isomorphism being an isomorphism of permutation groups).
We start by a few group theoretical properties of $G$.
\begin{lemma}\label{lem:invsubspaces_aff}
Consider $G={\rm Aff}(\FF_{p^2})$ acting on $V=\mathbb{F}_2^{p^2}$ via the embedding $G\to S_{p^2}$. Then the $G$-invariant subspaces of $V$ are the same as the $S_{p^2}$-invariant subspaces; that is to say, the spaces $V_0,V_1,V_{p^2-1}, V_{p^2}$ as in Lemma~\ref{lem:invariantsubspace}.
\end{lemma}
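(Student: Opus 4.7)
The plan is to show that $V = V_1 \oplus V_{n-1}$ is a decomposition into non-isomorphic simple $G$-modules, where $n := p^2$; then an elementary analysis yields exactly the four listed submodules. Since $p$ is odd, $n$ is odd, so the all-ones vector has odd weight and $V_1 \cap V_{n-1} = 0$; together with $\dim V_1 + \dim V_{n-1} = n$, this gives the direct-sum decomposition as $G$-modules (both summands are $S_n$-invariant by Lemma~\ref{lem:invariantsubspace}, hence $G$-invariant). The one-dimensional $V_1$ is automatically simple. Assuming further that $V_{n-1}$ is simple, the two summands are non-isomorphic (their dimensions differ), so $\Hom_G(V_{n-1}, V_1) = 0$ and the standard submodule analysis of a direct sum of two non-isomorphic simples yields precisely the submodules $0, V_1, V_{n-1}, V$.

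The key step is to prove simplicity of $V_{n-1}$, which I will do using the semidirect-product structure $G = T \rtimes \mathbb{F}_{p^2}^\times$, where $T = \{\sigma_{1,b} : b \in \mathbb{F}_{p^2}\} \cong (\mathbb{F}_{p^2}, +)$ is the translation subgroup, of order $p^2$, coprime to $\mathrm{char}(\mathbb{F}_2) = 2$. By Maschke's theorem, $V$ is semisimple as an $\mathbb{F}_2[T]$-module; after base change to $\overline{\mathbb{F}}_2$ it decomposes into one-dimensional character isotypes $V_\chi$ indexed by $\chi \in T^\vee := \Hom(T, \overline{\mathbb{F}}_2^\times)$. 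Because $T$ acts transitively on $\mathbb{F}_{p^2}$, the $T$-fixed subspace of $V$ consists of the constant functions, which is exactly $V_1$; hence the trivial-character isotype equals $V_1 \otimes \overline{\mathbb{F}}_2$ and $V_{n-1} \otimes \overline{\mathbb{F}}_2 = \bigoplus_{\chi \neq \mathrm{triv}} V_\chi$. The subgroup $\mathbb{F}_{p^2}^\times \leq G$ acts on $T$ by multiplication, simply transitively on $T \setminus \{0\}$; via the $\mathbb{F}_{p^2}^\times$-equivariant bijection $T \to T^\vee$, $t \mapsto (x \mapsto \psi(tx))$, for any fixed non-trivial $\psi \in T^\vee$ (which exists since $p$ is odd and so $\mu_p \subseteq \overline{\mathbb{F}}_2^\times$), this transfers to a transitive action of $\mathbb{F}_{p^2}^\times$ on $T^\vee \setminus \{\mathrm{triv}\}$. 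Hence any $G$-invariant subspace of $V_{n-1} \otimes \overline{\mathbb{F}}_2$, being both $T$-invariant (a sum of isotypes) and $\mathbb{F}_{p^2}^\times$-invariant (a union of orbits of isotypes), is either $0$ or all of $V_{n-1} \otimes \overline{\mathbb{F}}_2$. So $V_{n-1}$ is absolutely simple, and in particular simple over $\mathbb{F}_2$.

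The main obstacle is verifying that $\mathbb{F}_{p^2}^\times$ acts transitively on the non-trivial characters of $T$, which ultimately depends on the $\mathbb{F}_{p^2}^\times$-equivariance of the self-duality $T \cong T^\vee$ coming from the pairing $(s,t) \mapsto \psi(st)$; once this is set up carefully, the conclusion follows from standard semisimple-module theory.
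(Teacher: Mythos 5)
Your proof is correct, and it takes a genuinely different route from the paper. The paper quotes two results of Klemm on permutation modules: from the existence of the odd-order transitive translation subgroup it gets (Hilfssatz 7(b)) the dichotomy that any invariant subspace $U$ satisfies $U\subseteq V_{n-1}$ or $V_1\subseteq U$, and from $2$-transitivity, $2\nmid n$ and the structure of the point stabilizer it gets (Satz 8(b)) that $V_{n-1}$ is simple; these two facts are then combined by an intersection argument. You instead exploit the explicit semidirect product structure ${\rm Aff}(\mathbb{F}_{p^2})=T\rtimes\mathbb{F}_{p^2}^\times$: oddness of $n=p^2$ gives the $G$-module decomposition $V=V_1\oplus V_{n-1}$, and simplicity of $V_{n-1}$ is proved directly by restricting to the odd-order abelian subgroup $T$, base-changing to $\overline{\mathbb{F}}_2$, decomposing the regular $T$-module into its $p^2$ distinct character lines, and using the ($\psi$-twisted self-duality) fact that $\mathbb{F}_{p^2}^\times$ permutes the nontrivial additive characters transitively; the final passage from ``direct sum of two non-isomorphic simples'' to exactly four submodules is the standard isotypic-component argument and needs no semisimplicity of $\mathbb{F}_2[G]$ itself, only that $V$ is a sum of simples. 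All the steps check out, including the equivariance of $t\mapsto\psi(t\,\cdot)$ and the identification of the $T$-fixed space with $V_1$. What each approach buys: the paper's argument is shorter on the page and applies verbatim to any $2$-transitive group of odd degree containing a suitable odd-order transitive subgroup, at the cost of an external reference (in German) whose hypotheses must be matched carefully; your argument is self-contained, uses only Maschke and elementary character theory, and in fact proves the stronger statement that $V_{n-1}$ is absolutely simple, though it is tailored to the affine group (it does generalize painlessly to ${\rm Aff}(\mathbb{F}_{p^k})$ with $p$ odd, which is all that is needed here).
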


\begin{proof}
Let $U$ be a $G$-invariant subspace of $V$.
We want to apply the results of \cite{Klemm}
to $\mathfrak{G}=G$, $n=p^2$, $\Omega=\mathbb{F}_{p^2}\cong \{1,\dots,n\}$ and the field $K=\mathbb{F}_2$.
Note that in the notation used there, $M_1=V_1$, $M^1=V_{n-1}$, and $M=(M_1+M^1)/M_1=(V_1\oplus V_{n-1})/V_1\cong V_{n-1}$, as $\FF_2[G]$-modules.

Since $\mathfrak{G}$ contains the transitive subgroup $\mathfrak{H}:=\mathbb{F}_{p^2}$ 
for which $2$ does not divide the order of the stabilizer $\mathfrak{H}_a=1$ for $a\in\Omega$,
\cite[Hilfssatz 7(b)]{Klemm} gives that 
\begin{equation}\label{eq:subspaces_aff}
U\subseteq V_{n-1}\quad \mbox{or} \quad V_1 \subseteq U.
\end{equation}

Moreover, since $\mathfrak{G}$ is $2$-transitive on $\Omega$, $2\nmid n$,
and the stabilizer $\mathfrak{G}_a=\mathbb{F}_{p^2}^\times$, for $a=0\in\Omega$ contains the subgroup $\tilde{\mathfrak{H}}:=\mathfrak{G}_a$,
which is transitive on $\Omega_a=\Omega\smallsetminus\{a\}$ and satisfies $2\nmid|\tilde{\mathfrak{H}}_b|=1$ for $b\in\Omega_a$,
\cite[Satz 8(b)]{Klemm} gives that $V_{n-1}$ is simple.
Thus, $U\cap V_{n-1}=V_0$ or $U\cap V_{n-1}=V_{n-1}$.

If $U\cap V_{n-1}=V_0$, then $\dim U\leq 1$ and by \eqref{eq:subspaces_aff} we conclude that either $U=V_0$ or $V_1\subseteq U$ and therefore $U=V_1$.
If $U\cap V_{n-1} = V_{n-1}$, then we conclude from $V/V_{n-1}\cong\mathbb{F}_2$ that either $U=V_{n-1}$ or $U=V_n$.
\end{proof}

For an element $\sigma_{a,b}\in G$ we let $\lambda(\sigma_{a,b}) :=(\lambda_1, \ldots, \lambda_{p^2} )\vdash p^2$ be the 
cycle type of $\sigma_{a,b}$.
\begin{lemma}\label{lem:cycletypeaff}
Let $a\in\FF_{p^2}^\times$, $b\in \FF_{p^2}$ and $\sigma=\sigma_{a,b}$. 
\begin{enumerate}
\item[(a)] If $a=1$ and $b=0$, then $\lambda(\sigma) = \lambda^0 := (p^2,0,\ldots,0)$.
\item[(b)] If $a=1$ and $b\neq 0$, then $\lambda(\sigma)=\lambda^{p+} := (0,\ldots, 0,p,0,\ldots,0)$.
\item[(c)] If $a\neq 1$ has multiplicative order $d$, then $\lambda(\sigma) =\lambda^{d\times}:= (1,0,\ldots, 0,\frac{p^2-1}{d}, 0 ,\ldots, 0)$.
\end{enumerate}

\end{lemma}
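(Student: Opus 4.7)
The plan is to compute each cycle type directly from the dynamics of the affine map $\sigma_{a,b}\colon x\mapsto ax+b$ on $\mathbb{F}_{p^2}$, splitting on the value of $a$.

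First I would dispose of (a) as a triviality: if $a=1$ and $b=0$ then $\sigma=\mathrm{id}$, so every point is a fixed point and $\lambda_1=p^2$. For (b), $\sigma_{1,b}$ is the translation $x\mapsto x+b$. Iterating gives $\sigma^{k}(x)=x+kb$, and since $\mathbb{F}_{p^2}$ has characteristic $p$, we have $\sigma^{k}=\mathrm{id}$ iff $p\mid k$ (using $b\neq 0$). Hence every orbit has exact length $p$; as there are $p^2$ points, the number of orbits is $p$, giving $\lambda_p=p$ and $\lambda_j=0$ otherwise.

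For (c), the idea is to conjugate $\sigma_{a,b}$ to a linear map and then use the cyclic structure of $\mathbb{F}_{p^2}^{\times}$. A direct computation shows that $\sigma_{a,b}$ has the unique fixed point $x_0 = b/(1-a)$ (which makes sense since $a\neq 1$), and conjugating by the translation $\tau\colon x\mapsto x-x_0$ yields
\[
 \tau\sigma_{a,b}\tau^{-1}=\sigma_{a,0}\colon x\mapsto ax.
\]
Conjugation preserves cycle type, so it suffices to analyze $\sigma_{a,0}$. The point $0$ is fixed, and for $x\neq 0$ one has $\sigma_{a,0}^{k}(x)=a^{k}x=x$ iff $a^{k}=1$ iff $d\mid k$. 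Thus every nonzero point lies in an orbit of length exactly $d$, and since $d\mid p^2-1$ there are $(p^2-1)/d$ such orbits. This gives $\lambda_1=1$, $\lambda_d=(p^2-1)/d$, and $\lambda_j=0$ otherwise, matching $\lambda^{d\times}$.

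I do not anticipate a serious obstacle: the only mildly delicate step is the bookkeeping in case (c), namely writing down the fixed point $x_0$, checking that conjugating by a translation really replaces $(a,b)$ by $(a,0)$, and arguing that all nontrivial orbits have the same length $d$ rather than some proper divisor of $d$ (which follows immediately from the fact that $d$ is the exact multiplicative order of $a$). A consistency check via $\sum_{j} j\lambda_j = p^2$ would be used in each case to catch any off-by-one mistakes.
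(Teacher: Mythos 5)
Your proposal is correct and follows essentially the same route as the paper: parts (a) and (b) are dispatched directly, and part (c) is reduced to the linear map $\sigma_{a,0}$ (whose nonzero orbits all have length exactly $d$) by noting that $\sigma_{a,b}$ is conjugate to it, with your explicit fixed point $x_0=b/(1-a)$ and conjugation by a translation merely spelling out the conjugacy the paper invokes.
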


\begin{proof}
(a) and (b) are trivial. 

Let $a\neq 1$ be of multiplicative order $d$. 
Then $x=0$ is the unique fixed point of $\sigma_{a,0}$. For each $x\neq 0$, the orbit of $x$ is $\{x,ax,\cdots ,a^{d-1} x\}$ and is of length $d$. So we have exactly $\frac{p^2-1}{d}$ orbits of length $d$. This implies that $\lambda(\sigma_{a,0})=\lambda^{d\times}$. Since $\sigma_{a,b}$ is conjugated to $\sigma_{a,0}$, we get that in fact $\lambda(\sigma_{a,b})=\lambda^{d\times}$ for all $b$, as was needed for (c).
\end{proof}

\begin{proposition}\label{prop:Galoisgroup}
Let $p>2$ be prime, $F|\mathbb{F}_{p^2}$ a field extension, $A_0$ and $A_1$ independent variables,
and $K=F(A_0,A_1)$.
Then the polynomial
$$
 f(T)=T^{p^2}+A_1T+A_0
$$
satisfies
$$
\Gal(f(T)|K)\cong{\rm Aff}(\FF_{p^2}) \quad\mbox{and}\quad \Gal(f(-T^2)|K) \cong C_2\wr {\rm Aff}(\FF_{p^2}).
$$
\end{proposition}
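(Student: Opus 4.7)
The first isomorphism $\Gal(f(T)|K) \cong {\rm Aff}(\FF_{p^2})$ is recalled in \eqref{group} and comes from \cite[Theorem 2]{Uchida1970}. For the second, the plan is to apply Lemma~\ref{lemma:sqindepwreathproduct} with $G = \Gal(f(T)|K)$. Its invariant-subspace hypothesis is exactly Lemma~\ref{lem:invsubspaces_aff}, so what remains is to show that, writing $f(T)=\prod_{i=1}^{p^2}(T+y_i)$ in its splitting field $L$, the elements $f(0) = A_0$ and $y_1$ are square-independent in $L$. I will do this by mimicking the three-step argument of Proposition~\ref{prop:genericGal} and Lemma~\ref{lem:squareindependent}, with $S_n$ replaced by ${\rm Aff}(\FF_{p^2})$.

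\emph{Step 1: $A_0$ and $\Discr(f)$ are square-independent in $K$.} Since ${\rm char}\,F = p$ one has $f'(T) = A_1$, so the resultant formula gives $\Discr(f) = \pm A_1^{p^2}$ (in fact the sign is $+1$ because $8 \mid p^2-1$); as $p^2$ is odd and $A_0, A_1$ are algebraically independent over $F$, none of $A_0$, $\Discr(f)$, $A_0\Discr(f)$ is a square in $K$. \emph{Step 2: $A_0$ and $y_1$ are square-independent in $K(y_1)$.} From $f(-y_1)=0$ and $p^2$ odd one gets $A_0 = y_1(y_1^{p^2-1}+A_1)$, so $K(y_1) = F(A_1,y_1)$ is a rational function field and the three relevant products in it are visibly non-squares. \emph{Step 3: pass from $K(y_1)$ to $L$.} The stabilizer $H := \Gal(L|K(y_1))$ is the point stabilizer in $G$, hence isomorphic to $\FF_{p^2}^{\times}$. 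Since $H$ is cyclic of even order $p^2-1$, it has a unique index-$2$ subgroup, so $L$ contains a unique quadratic extension of $K(y_1)$; by Lemma~\ref{lem:cycletypeaff} a generator of $H$ acts on the roots of $f_1 := f/(T+y_1)$ as a single $(p^2-1)$-cycle of sign $(-1)^{p^2-2} = -1$, so $\Discr(f_1) \notin K(y_1)^{\times 2}$ and that extension is $K(y_1)(\sqrt{\Discr(f_1)})$. If $A_0^a y_1^b \in L^{\times 2}$ with $(a,b)\in\{0,1\}^2\setminus\{(0,0)\}$, Step~2 forces $A_0^a y_1^b \Discr(f_1) \in K(y_1)^{\times 2}$; applying $\mathrm{N} = \mathrm{N}_{K(y_1)|K}$ with the translations $\sigma_{1,c}$ ($c \in \FF_{p^2}$) as coset representatives of $G/H$ yields $\mathrm{N}(A_0) = A_0^{p^2}$, $\mathrm{N}(y_1) = \prod_i y_i = A_0$, and $\mathrm{N}(\Discr(f_1)) = \Discr(f)^{p^2-2}$ (each factor $(y_i-y_j)^2$ of $\Discr(f)$ is missed by exactly $2$ of the $p^2$ translates of $\Discr(f_1)$). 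Thus $A_0^{ap^2+b}\Discr(f)^{p^2-2} \in K^{\times 2}$, which since $p^2$ and $p^2-2$ are odd reduces to $A_0^{a+b}\Discr(f) \in K^{\times 2}$, contradicting Step~1 in each case $(a,b) \in \{(1,0),(0,1),(1,1)\}$.

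The main obstacle is Step~3, specifically the identification of the unique quadratic subextension of $K(y_1)$ inside $L$ as $K(y_1)(\sqrt{\Discr(f_1)})$; this rests on the sign computation from Lemma~\ref{lem:cycletypeaff} (together with the elementary fact that $H \cong \FF_{p^2}^{\times}$ has a unique index-two subgroup). Once this is in hand, the norm-descent to Step~1 is essentially the same combinatorial identity as in the $S_n$ case, which makes the remainder of the argument a direct transcription of the proof of Proposition~\ref{prop:genericGal}.
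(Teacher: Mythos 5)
Your proposal is correct, and it shares the paper's skeleton: Uchida's theorem for $\Gal(f|K)\cong{\rm Aff}(\FF_{p^2})$, the reduction via Lemmas \ref{lemma:sqindepwreathproduct} and \ref{lem:invsubspaces_aff} to square-independence of $A_0=f(0)$ and $y_1$ in $L$, the use of $A_0=y_1(y_1^{p^2-1}+A_1)$ to see that $K_1:=K(y_1)=F(A_1,y_1)$ is rational, the computation ${\rm discr}(f)=\pm A_1^{p^2}$, and the fact that $\Gal(L|K_1)\cong\FF_{p^2}^\times$ is cyclic, so $K_1$ has a unique quadratic extension inside $L$. Where you genuinely diverge is in the endgame. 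You transplant the norm-descent of Lemma \ref{lem:squareindependent}: you identify the unique quadratic subextension as $K_1(\sqrt{{\rm discr}(f_1)})$ by a sign computation (a generator of the stabilizer acts as a single $(p^2-1)$-cycle on the roots of $f_1$, via Lemma \ref{lem:cycletypeaff}), then take norms to $K$ using the translations $\sigma_{1,c}$ as coset representatives (your identities ${\rm N}(y_1)=A_0$ and ${\rm N}({\rm discr}(f_1))={\rm discr}(f)^{p^2-2}$ are correct, since each pair of roots is avoided by exactly two translates), and contradict your Step 1. The paper instead identifies the unique quadratic subextension directly as $K_1(\sqrt{{\rm discr}(f)})=K_1(\sqrt{\pm A_1})$ --- which needs only that $\pm A_1^{p^2}$ is a non-square in the rational field $F(A_1,y_1)$, no cycle-type or sign argument --- and then derives the contradiction entirely inside $K_1$: the relation $\pm(y_1^{p^2-1}+A_1)^a y_1^{a+b}A_1\in (K_1^\times)^2$ is impossible because $y_1^{p^2-1}+A_1$, $y_1$ and $A_1$ are coprime in $F[A_1,y_1]$. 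So the paper's route dispenses with both your Step 1 and the norm computation, while your route demonstrates that the $S_n$-template of Lemma \ref{lem:squareindependent} carries over essentially verbatim once the point stabilizer is cyclic of even order and contains an element acting as one even-length cycle on the remaining roots; both are valid, yours being somewhat longer but more mechanical.
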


\begin{proof}
Write $f(T)=\prod_{i=1}^{p^2}(T+y_i)$, so that $L=K(y_1,...,y_{p^2})$ is a splitting field of $f$.
Let $K_1=K(y_1)$. Since $0=f(-y_1)$, we have 
\begin{equation}\label{eq:A_0intermsofA_1y_1}
A_0=(y_1^{p^2-1}+A_1)y_1.
\end{equation} 
Thus,  $K_1 =F(A_0,A_1,y_1) = F(A_1,y_1)$. Since the transcendence degree of $K_1$ over $F$ is $2$, this implies that $K_1$ is the field of rational functions in $A_1,y_1$ over $F$. 

As $f'(T)=A_1$, we get that
\[
{\rm discr}(f)=\pm \prod_{i=1}^{p^2} \prod_{j\neq i} (y_i-y_j) = \pm \prod_{i=1}^{p^2} f'(y_i) = \pm A_1^{p^2}.
\]
So, as $p^2$ is odd, ${\rm discr}(f)$ is not a square in $K_1$, hence $L_1:=K_1(\sqrt{{\rm discr}(f)})=K_1(\sqrt{\pm A_1})$ is a quadratic extension of $K_1$ that is contained in $L$. 

Since $\Gal(L|K_1)$ is a stabilizer in $G={\rm Aff}(\FF_{p^2})$ of a point $x\in \FF_{p^2}$, which, without loss of generality, we may choose to be $x=0$, we have $\Gal(L|K_1) \cong \FF_{p^2}^\times$. As $\FF_{p^2}^\times$ is cyclic, $K_1$ has a unique quadratic extension inside $L$ which by the previous paragraph is $L_1$.

By Lemmas~\ref{lemma:sqindepwreathproduct} and \ref{lem:invsubspaces_aff}, it suffices to prove that $A_0=f(0)$ and $y_1$ are square-independent in $L$.
Assume on the contrary that $A_0^ay_1^b\in (L^\times)^2$ for some $a,b\in \{0,1\}$ with either $a=1$ or $b=1$. 
Note that since $K_1$ is a rational function field in $A_1,y_1$,  \eqref{eq:A_0intermsofA_1y_1} implies that $A_0,y_1$ are square-independent in $K_1$, so  $A_0^ay_1^b\notin (K_1^\times)^2$. Thus $K_1(\sqrt{A_0^ay_1^b})$ is a quadratic extension of $K_1$ that is contained in $L$, so it must be equal to $L_1$. 
Thus by \eqref{eq:A_0intermsofA_1y_1},
\[
 \pm A_0^ay_1^bA_1 = \pm (y_1^{p^2-1}+A_1)^ay_1^{a+b}A_1 \in (K_1^\times)^2,
\]
which leads to a contradiction, as $K_1$ is a rational function field in $A_1,y_1$, and $A_1+y_1^{p^2-1}$, $y_1$, and $A_1$ are co-prime in $F[A_1,y_1]$.
\end{proof}

\begin{proof}[Proof of Theorem~\ref{thm:2parameter}]
Let $q=p^{2\nu}$, $f(T)=T^{p^2}+A_1T+A_0$ and $G={\rm Aff}(\FF_{p^2})$.
Since by Proposition~\ref{prop:Galoisgroup} the Galois group of $g(T):=f(-T^2)$ is $C_2\wr G$
both over $\mathbb{F}_q(A_0,A_1)$ and over $\overline{\mathbb{F}}_q(A_0,A_1)$, 
the same line of arguments as in the proof of Theorem~\ref{maintheorem} gives that 
\begin{equation}\label{eq:forvsi}
\left<b_q(f)\right>_{\|f-f_0\|\leq \|f_0\|^{\epsilon}}\sim \frac{\#(X_{p^2}\cap C_2\wr G)}{\#(C_2\wr G)},
\end{equation}
as $\nu\to \infty$. By Lemma~\ref{lem:cycletypeaff}, the number $N_{\lambda}$ of elements of $G$ of cycle type $\lambda$ is
\[
N_{\lambda} = \begin{cases}
1, & \lambda = \lambda^0,\\
p^2-1, & \lambda = \lambda^{p+},\\
p^2\phi(d), &\lambda = \lambda^{d\times},\ 1\neq d\mid p^{2}-1,\\
0, &\mbox{otherwise}.
\end{cases}
\]
Therefore, as we saw in the proof of \eqref{lem:Xn}, one has
\begin{equation}\label{eq:XnG}
\begin{split}
\#(X_{p^2}\cap C_2\wr G) &= \sum_{\lambda\vdash p^2}  N_{\lambda} \prod_{j=1}^{p^2} 2^{\lambda_j(j-1)} \\
&=1+(p^2-1)2^{p(p-1)} + p^2\sum_{1\neq d\mid p^2-1}\phi(d)2^{(d-1)(p^2-1)/d}.
\end{split}
\end{equation}
Since $\#(C_2\wr G) = 2^{p^2}p^2(p^2-1)$, by \eqref{eq:XnG} it follows that $\frac{\#(X_{p^2}\cap C_2\wr G)}{\#(C_2\wr G)} = c_p$ (with $c_p$ defined in \eqref{eq:Cp}), and thus by \eqref{eq:forvsi}, the proof is done. 
\end{proof}

\section*{Acknowledgements}
The authors are grateful to Alexei Entin for suggesting to them the characterization of 
sums of squares in terms of the Frobenius,
to Peter M\"uller for pointing them to the paper of Klemm,
and to Ron Peled for introducing the Ewens sampling formula to them.

The first author was partially sponsored by the Shulamit Aloni Grant for promoting women in science of the Israeli Ministry of Science, Technology and Space no.\ 3-11924  and the first and second authors by a grant of the Israel Science Foundation no.\ 952/14. 
The third author was supported by a research grant from the Ministerium f\"ur Wissenschaft, Forschung und Kunst Baden-W\"urttemberg.

\end{document}